\documentclass[11pt]{article}
\usepackage{amstext, amsmath,latexsym,amsbsy,amssymb,amsmath}

\setlength{\evensidemargin}{0in} \setlength{\oddsidemargin}{0in}
\setlength{\textwidth}{6in} \setlength{\topmargin}{0in}
\setlength{\textheight}{8in}

\usepackage{enumitem}

\newtheorem{proposition}{Proposition}[section]
\newtheorem{remark}{Remark}[section]
\newenvironment{proof}{{\bf Proof\ }}{\QED\\}
\newtheorem{lemma}{Lemma}[section]

\numberwithin{equation}{section}
\newtheorem{theorem}{Theorem}[section]

\newcommand{\QED}{\hspace*{\fill}\rule{2.5mm}{2.5mm}}

\usepackage{amssymb}\usepackage{graphicx}
\usepackage{tikz}

\usepackage{color}
\newcommand\qed{\hfill$\sqcap\kern-7.5pt\hbox{$\sqcup$}$}

\newcommand{\RR}{\mathbb{R}}

\newcommand{\beqn}{\begin{equation}}
\newcommand{\eeqn}{\end{equation}}
\newcommand{\bear}{\begin{eqnarray}}
\newcommand{\eear}{\end{eqnarray}}
\newcommand{\bean}{\begin{eqnarray*}}
\newcommand{\eean}{\end{eqnarray*}}

\newcommand{\cE}{\omega}

\allowdisplaybreaks

\begin{document}
\title{On the wave turbulence theory  for stratified  flows in the ocean }

\author{Irene M. Gamba\footnotemark[1] \and Leslie M. Smith\footnotemark[2] \and Minh-Binh Tran\footnotemark[3]
}

\footnotetext[1]{Department of Mathematics and Institute for Computational Engineering and Sciences, The University of Texas at Austin, TX 78712, USA.\\
Email: gamba@math.utexas.edu
}

\footnotetext[2]{Department of Mathematics and Department of Engineering Physics, University of Wisconsin-Madison, Madison, WI 53706, USA. \\Email: lsmith@math.wisc.edu
}

\footnotetext[3]{Department of Mathematics, Southern Methodist University, Dallas, Texas, TX 75275, USA. \\Email: minhbinht@mail.smu.edu
}

\maketitle
\begin{abstract} 
After the pioneering work of Garrett and Munk, the statistics of oceanic internal gravity waves has become a central subject of research in oceanography.
The time evolution of the spectral energy of internal waves in the ocean can be described by a near-resonance wave turbulence equation, of quantum Boltzmann type. 
In this work, we provide the first rigorous mathematical study for the equation by showing the global existence and uniqueness of strong solutions.
 \end{abstract}

{\bf Keywords:} wave (weak) turbulence theory, quantum Boltzmann equation, wave-wave interactions, stratified fluids, oceanography, near-resonance\\

{\bf MSC:} {35B05, 35B60, 82C40}


\section{Introduction}
The study of wave turbulence has obtained spectacular success in the understanding of spectral energy transfer processes in plasmas, oceans, and planetary atmospheres. Wave-wave interactions in continuously stratified fluids have been a fascinating subject of intensive research in the last few decades. In particular, the observation of a nearly universal internal-wave energy spectrum in the ocean, first described by Garrett and Munk (cf. \cite{garrett1975space,garrett1979internal,cairns1976internal}), plays a very important role in understanding  such wave-wave interactions. The existence of a universal spectrum is generally perceived to be  the result of nonlinear interactions of waves with different wavenumbers. As the nonlinearity of the underlying primitive equations is quadratic, waves interact in triads (cf. \cite{waleffe1992nature}). 
Furthermore, since  the linear internal wave dispersion relation can satisfy a three-wave resonance condition, resonant triads are expected to dominate the dynamics for weak nonlinearity  (cf. \cite{mccomasbretherton77}).

Resonant wave interactions can be characterized by Zakharov kinetic equations (cf. \cite{zakharov2012kolmogorov,Nazarenko:2011:WT,majda1997one,cai1999spectral,zakharov1967weak,zakharov1968stability}).  The equations   describe, under the assumption of weak nonlinearity, the 
spectral energy transfer on the resonant manifold, which is a set of wave vectors $k$, $k_1$, $k_2$ satisfying
\begin{equation}
\label{ResonantManifold}
k=k_1+k_2,\ \ \ \ \ \ \omega_k=\omega_{k_1}+\omega_{k_2},
\end{equation}
where the frequency $\omega$ is given by 
the dispersion relation between the wave frequency $\omega$ and the  wavenumber $k$.
However, it is known that exact resonances defined by $\omega_k=\omega_{k_1}+\omega_{k_2}$ do not capture some important physical effects, such as energy transfer to
non-propagating wave modes with zero frequency, corresponding to generation of anisotropic coherent structures \cite{babin96,babin2000,bartello95,embidmajda96,embidmajda98,greenspan69,lelongriley91,longuet67,LukkarinenSpohn:2011:WNS,majdaembid98,phillips68,waleffe93,warn86}, see also \cite{EscobedoVelazquez:2015:OTT,Merino:Thesis:2015} for analytical arguments on reduced isotropic models. Some authors have included more physics by  allowing near-resonant interactions (cf. 
\cite{connaughton2001discreteness,lee2007formation,lvov2012resonant,l1997statistical,lvov2004hamiltonian,lvov2004noisy,lvov2010oceanic,newell69,smith2005near,remmel2009new,remmel2010nonlinear}), 
defined as
\begin{equation}
\label{NearResonantManifold}
k=k_1+k_2,\ \ \ \ |\omega_k-\omega_{k_1}-\omega_{k_2}|<\theta(f,k), 
\end{equation}
where $\theta$ accounts for broadening of the resonant surfaces and  depends on the wave density $f$ and the wave number $k$. When near resonances are included in the dynamics, numerical studies have demonstrated the formation of the
anisotropic, non-propagating wave modes in dispersive wave systems relevant to geophysical flows (cf. \cite{chekhlov96,huang2000,lee2007formation,remmel2014nonlinear,smith2001,smith2005near,smith1999transfer,smith2002generation}).

We consider in this paper the following near-resonance turbulence kinetic equation for internal wave interactions in the open ocean (cf. \cite{connaughton2001discreteness,l1997statistical,lvov2004hamiltonian,lvov2004noisy,lvov2012resonant}), 
\begin{equation}\label{WeakTurbulenceInitial}
\begin{aligned}
\partial_tf(t,k) + \mu_k f(t,k) \ =& \ Q[f](t,k), \ \ \ f(0,k)=f_0(k),
\end{aligned}
\end{equation}
in which  $f(t,k)$ is the nonnegative wave density  at  wavenumber $k \in \RR^d$, $d \ge 2$. As proposed by  Zakharov in  \cite{zakharov1967weak} water wave models  must include the term $\mu_kf=2\nu|k|^2 f$ for viscous damping effects, with $\nu$  the viscosity coefficient.

This model equation consist in a  kinetic three-wave interaction modelled by an interaction (or collision) operator given by the non-local form  
\begin{equation}\label{def-Qf}
Q[f](k) \ = \ \iint_{\mathbb{R}^{2d}} \Big[ R_{k,k_1,k_2}[f] - R_{k_1,k,k_2}[f] - R_{k_2,k,k_1}[f] \Big] dk_1dk_2
 \end{equation}
with 
\begin{equation}\label{coll-op}
\begin{aligned}
R_{k,k_1,k_2} [f]:=   |V_{k,k_1,k_2}|^2\delta(k-k_1-k_2)\mathcal{L}_f(\omega_k -\omega_{k_1}-\omega_{k_2})(f_1f_2-ff_1-ff_2),
\end{aligned}
\end{equation}
with the short-hand notation $f = f(t,k)$ and $f_j = f(t,k_j)$. The singular measure given by the Dirac delta function $\delta(\cdot)$ ensures 
that interactions are between triads with 
\begin{equation}\label{cv} k = k_1 + k_2.\end{equation}

%
%

The  transition probability factor or collision kernel $V_{k,k_1,k_2}$ under consideration  is of the form (cf. \cite{lvov2004noisy,connaughton2001discreteness,lvov2004hamiltonian,lvov2012resonant,l1997statistical})
\begin{equation}\label{def-VV}
\begin{aligned}
V_{k,k_1,k_2}  \ = \mathfrak{C}\left({|k||k_1||k_2|}\right)^\frac12,
\end{aligned}
\end{equation}
with $\mathfrak{C}$ is some physical constant.

Next,  we consider the dispersion law 
\begin{equation}\label{Def:DispersionLaw}
\omega_k=\sqrt{F^2+\frac{g^2}{\rho_0^2N^2}\frac{|k|^2}{m^2}},
\end{equation}
where $F$  is the Coriolis parameter, $N$ is the (Brunt-Vaisala)  buoyancy frequency, In addition, the parameter $m$ is the reference vertical wave number determined from observations, $g$ is the gravitational constant, $\rho_0$ is the reference value for density, or equivalently 
\begin{equation}\label{Def:DispersionLaw2}
\omega_k=\sqrt{\lambda_1+\lambda_2 |k|^2}, \qquad \ \text{for} \ \lambda_1=F^2\, , \ \ \text{and}\  \ \lambda_2= \frac 1{m^2}\left(\frac{g}{\rho_0N}\right)^2=
 \frac 1{k_z^2}.
\end{equation}
where $k_z$ cartesian vertical wave number and $m=  k_z {g}{(\rho_0 N)}^{-1}$. 
In the absence of the Coriolis force, i.e. $F=0$,  the dispersion relation becomes
\begin{equation}\label{Def:DispersionLaw-WithoutCoriolis}
\omega_k= \frac{|k|}m \approx   \frac{|k|}{k_z} \, .
\end{equation}

The operator $\mathcal{L}_f$ is defined as
\begin{equation}
\label{OperatorL}
\mathcal{L}_f(\zeta)=\frac{\Gamma_{k,k_1,k_2}}{\zeta^2+\Gamma_{k,k_1,k_2}^2},
\end{equation}
with the condition that
$$\lim_{\Gamma_{k,k_1,k_2}\to 0}\mathcal{L}_f(\zeta)=\pi\delta(\zeta).$$
Thus when $\Gamma_{k,k_1,k_2}$ tends to $0$, \eqref{def-Qf} becomes the following exact resonance collision operator (cf. \cite{zakharov1967weak,zakharov1968stability,Hasselmann:1962:OTN1})
\begin{equation}\label{def-QfExact}Q_e[f](k) \ = \ \pi  \iint_{\mathbb{R}^{2d}} \Big[ \tilde{R}_{k,k_1,k_2}[f] - \tilde{R}_{k_1,k,k_2}[f] - \tilde{R}_{k_2,k,k_1}[f] \Big] dk_1dk_2 \end{equation}
with $$\begin{aligned}
\tilde{R}_{k,k_1,k_2} [f]:=  |V_{k,k_1,k_2}|^2\delta(k-k_1-k_2)\delta(\omega_k -\omega_{k_1}-\omega_{k_2})(f_1f_2-ff_1-ff_2). 
\end{aligned}
$$
With out loss of generality, one could ignore the constant $\pi$ in the collision operator $Q_e[f]$ since it can be absorbed in the time variable. 

Moreover, the resonance broadening frequency $\Gamma_{k,k_1,k_2}$ may be written
\begin{equation}
\label{Gamma}
\Gamma_{k,k_1,k_2}=\gamma_k+\gamma_{k_1}+\gamma_{k_2},
\end{equation}
where $\gamma_k$ is computed in \cite{l1997statistical} using a one-loop diagram approximation:
\begin{equation*}
\gamma_k\backsim \mathfrak{c}|k|^2\int_{\mathbb{R}_+}|k|^2|f(t,|k|)|d|k|,\end{equation*}
and $\mathfrak{c}$ is a physical constant, which can be normalized to be $1$.
Approximating the integral 
\begin{equation*}\int_{\mathbb{R}_+}|k|^2|f(t,|k|)|d|k| \approx \int_{\mathbb{R}^3}f(t,k)dk,\end{equation*} we obtain a formula for $\gamma_k$ that will be used throughout the paper
\begin{equation}\label{DampingTerm}
\gamma_k=|k|^2\int_{\mathbb{R}^3}f(t,k)dk.\end{equation}

The above formulation of $\gamma_{k}$ indicate the broadening resonance width $\theta$ defined in \eqref{NearResonantManifold}. Note that the formulation of $\Gamma_{k,k_1,k_2}$ is given
\begin{equation}\label{broaden1}
\Gamma_{k,k_1,k_2}=(|k|^2+|k_1|^2+|k_2|^2)\int_{\mathbb{R}^3}f(t,k)dk,
\end{equation}
Observe that 
$$\sqrt{n}\Gamma_{k,k_1,k_2} \le |\omega_k-\omega_{k_1}-\omega_{k_2}|\le \sqrt{n+1}\Gamma_{k,k_1,k_2}, \ \ \ n\in \mathbb{N},$$
then 
$$\frac{1}{(n+2)\Gamma_{k,k_1,k_2}} \le \mathcal{L}_f(\omega_k-\omega_{k_1}-\omega_{k_2})\le \frac{1}{(n+1)\Gamma_{k,k_1,k_2}},$$
in other words, function $\mathcal{L}_f(\omega_k-\omega_{k_1}-\omega_{k_2})$ is mostly concentrated in the interval where
\begin{equation}\label{broaden2}
|\omega_k-\omega_{k_1}-\omega_{k_2}|\le \Gamma_{k,k_1,k_2}.
\end{equation}
In other words, the resonance width $\theta$ is proportional to $\Gamma_{k,k_1,k_2}$, which depends on $f$ and $k$. 

This fact will be used in the proof of Propositions \ref{Propo:C12}, \ref{Propo:MassLowerBound} and \ref{Propo:HolderC12}.

In the field of wave turbulence, the most commonly used asymptotical analysis to derive the kinetic equation \eqref{WeakTurbulenceInitial}- \eqref{cv} is statistical closure of
the infinite hierarchy of cumulants, in the weakly nonlinear and long-time limits (see, for example, the review by Newell and Rumpf \cite{newell2011wave}).
Evolution of higher-order cumulants can be interpreted as a modification of the wave frequency, with real part corresponding to a frequency shift and
with imaginary part corresponding to resonance broadening. \\
A Feynman-Dyson diagrammatic approach may also be used, adapted for turbulence in fluids by Wyld \cite{wyld1961}, for more general classical systems by
Martin, Siggia and Rose  \cite{MartinSiggiaRose1973}, and for Hamiltonian nonlinear wave fields by Zakharov and Lvov \cite{ZakharovLvov1975}.   In the context of acoustic turbulence, 
Lvov, Lvov, Newell and Zakharov  \cite{l1997statistical} considered a one-loop approximation to the resonance broadening, the form of which is the one to be adopted in our study.

It is noted that  wave turbulence equation \eqref{WeakTurbulenceInitial} shares a  similar structure with the quantum Boltzmann equation describing the evolution of the excitations in thermal cloud Bose-Einstein condensate systems (cf. \cite{QK0,josserand2001nonlinear,KD1,MR1837939,PomeauBrachetMetensRica,ZakharovNazarenko:DOT:2005}). Our recent progress on  the classical Boltzmann equation (cf. \cite{Bobylev:BEF:2006,GambaPanferov:2004:OTB,GambaPanferovVillani:2009:UMB,AlonsoGamba:2015:OML}) and the quantum Boltzmann equation (cf. \cite{AlonsoGambaBinh,CraciunBinh,Binh9,germain2017optimal,JinBinh,ToanBinh,nguyen2017quantum,SofferBinh1,ReichlTran,SofferBinh2})  has shed some light on the open question of building a rigorous mathematical study for  \eqref{WeakTurbulenceInitial}. Different from the quantum Boltzmann cases (cf. \cite{SofferBinh1,AlonsoGambaBinh,CraciunBinh}), which could be considered as the exact resonance case \eqref{def-QfExact} with 
$$\omega_k=\omega_{k_1}+\omega_{k_2},$$
the energy of solutions for the  near-resonance  kinetic equation \eqref{WeakTurbulenceInitial} is not conserved.
The underlying shallow-water equations conserve a cubic energy, and the flow restricted to exact resonances conserves the quadratic part of the total energy \cite{warn86}.  However, 
conservation of the quadratic energy no longer holds when near resonant three-wave interactions are included in the dynamics.

We also split $Q$ as the sum of their positive and negative parts, referred to as a gain and a loss operators, respectively:
\begin{equation}\label{GainLoss}
Q[f] \ = \ Q_{\mathrm{gain}}[f] \ - \ Q_\mathrm{loss}[f],
\end{equation}
as is done with the classical Boltzmann operator  for binary elastic interactions. Here, the gain operator is also defined by the positive contributions in the total rate of change in time of the collisional form $Q(f)(t,k)$ 
\begin{equation}\label{Qgain}
\begin{aligned}
Q_\mathrm{gain}[f] \ = & \ \iint_{\mathbb{R}^{d}\times \mathbb{R}^{d}}|V_{k,k_1,k_2}|^2\delta(k-k_1-k_2)\mathcal{L}_f(\omega_k-\omega_{k_1}-\omega_{k_2})f_1f_2dk_1dk_2 \\
\ & +2\iint_{\mathbb{R}^{d}\times \mathbb{R}^{d}} |V_{k_1,k,k_2}|^2\delta(k_1-k-k_2)\mathcal{L}_f(\omega_{k_1}-\omega_{k}-\omega_{k_2})(ff_1+f_1f_2)dk_1dk_2.
\end{aligned}
\end{equation}
and the loss operator models the negative contributions in the total
rate of change in time of the same collisional form $Q(f)(t,k)$
\begin{equation}\label{Qlosss}Q_\mathrm{loss}[f] \ = \ f\vartheta[f],\end{equation}

with $\vartheta[f]$ being the collision frequency or attenuation coefficient, defined by
\begin{equation}\label{Qloss}
\begin{aligned}
\vartheta[f](k) \ = & \ 2\iint_{\mathbb{R}^{d}\times \mathbb{R}^{d}}|V_{k,k_1,k_2}|^2\delta(k-k_1-k_2)\mathcal{L}_f(\omega_k-\omega_{k_1}-\omega_{k_2})f_1dk_1dk_2 \\
\ & +2\iint_{\mathbb{R}^{d}\times \mathbb{R}^{d}} |V_{k_1,k,k_2}|^2\delta(k_1-k-k_2)\mathcal{L}_f(\omega_{k_1}-\omega_{k}-\omega_{k_2})f_2dk_1dk_2.
\end{aligned}
\end{equation}

Inspired by recent work by Alonso and two of the authors of the current manuscript \cite{AlonsoGambaBinh} on the quantum Boltzmann equation for cold bosonic gases, whose equation can also be derived by diagrammatic techniques,  we present here the existence and uniqueness solution to a Cauchy problem associated to  the model \eqref{WeakTurbulenceInitial}-\eqref{OperatorL}

The strategy consists in finding a suitable convex, positive cone, time invariant subspace {$S_T$}  of the Banach   space $L^1_N(\RR^d)$,  for which 
 the weak turbulence equation has a unique  strong solution, where this Banach space has norms defined by the $N^{th}$ moment as the expectation of the $N^{th}$-power of the dispersion relation, that is for any given density $g$,  
\begin{equation}\label{Def:MomentOrderk}
L^1_N(\RR^d):=\{g\in L^1(\mathbb{R}^d), \ \ s.t.\    \|g\|_{L^1_N} :=\mathfrak{M}_N[g]=\int_{\mathbb{R}^d}\omega^N_kg(k)dk \ <\infty \},
\end{equation}
in which we recall the dispersion relation $\cE_k = \sqrt{\lambda_1+\lambda_2|k|^2}$ as defined in \eqref{Def:DispersionLaw2}. 
Notice that when $g$ is positive, both  $\mathfrak{M}_n[g]$ and $\|g\|_{L^1_n}$ are equivalent.
Hence, the construction of such invariant subspace $S_T$  depend on the control of higher order moments defined as follows.

Our solution are  global and unique   in $L^1_N(\mathbb{R}^d)$ to \eqref{WeakTurbulenceInitial}, that is the satisfy 
\begin{equation}\label{WeakTurbulenceInitialReformed}
\partial_t f(t,k) \ = \ Q_\mathrm{gain}[f](t,k) \ - \ f(t,k)\vartheta[f](t,k) \ - 2\nu|k|^2f, \ \ \ f(0,k)=f_0(k) \in S_T.
\end{equation} 

A fundamental tool to accomplish our goal is to prove that there exists a differential equation of the following type, for the moments of the solution $f$ of \eqref{WeakTurbulenceInitialReformed}
$$\frac{d}{dt}\mathfrak{M}_N[f] \ \le \ C_1\mathfrak{M}_{N+1}[f]  - C_2\mathfrak{M}_{N+2}[f] ,$$
for some positive constants $C_1,C_2$, which leads to 
$$\frac{d}{dt}\mathfrak{M}_N[f] \ \le \ C_3\mathfrak{M}_{N}[f],$$
with $C_3$ being a positive constant. The above inequality then yields
 an exponential bound on the $N$-th moment of $f$
$$\mathfrak{M}_N[f] \le C e^{C'T}.$$

In order to do that, estimates on $Q_{\mathrm{gain}}$ and $Q_{\mathrm{loss}}$ are provided in Propositions \ref{Propo:C12} and \ref{Propo:MassLowerBound}. The proofs of these estimates are based on  careful bounds of $\mathcal{L}_f$ and $\Gamma_{k,k,k_1}$, that reduces to bounding the $0$-th moment of $f$, $\mathfrak{M}_0[f](t)$, from below by $e^{-(2\nu R_0^2+4R_0)t}\|f_0\chi_{R_0}\|_{L^1}$, where  $\chi_{R_0}$ is the characteristic function of the ball $B(O,R_0)$ centered at the origin with radius $R_0$ so that the quantity $\|f_0\chi_{R_0}\|_{L^1}>0$.

Finally, on any arbitrary fixed time interval $[0,T]$, we construct the solution of \eqref{WeakTurbulenceInitialReformed}  within a time-dependent invariant set $\mathcal{S}_T$, based on the exponential in time upper bound of $\mathfrak{M}_N[f]$ and the lower bound of $\mathfrak{M}_0[f]$. 

\bigskip

More specifically,  we define first the following two constants, $C^*$ and $C_*$,    for any given any $R_0$ by 
 \begin{equation}\label{Cstars}
 {C_* }: = \frac{C_0({\lambda_1},{\lambda_2})\left(1+{e^{(4\nu R_0^2+8R_0)T}}\right)}{{\|f_0(k)\chi_{R_0}\|_{L^1}}}, \ \ \  \text{and}\ \ \ {C^* : = 4\nu R_0^2+8R_0. }
  \end{equation}
  The specific value of $R_0$ will be determined later to secure the conditions to obtain a time invariant region.

 Hence, for any number $R^*>0$, $R_*>1$, moment order  $N$, and time $t >0$, we define the  convex positive cone $S_T$   as a subset on $L^1_N$ given by 
 \begin{align}\label{subsetS_T}
 \mathcal{S}_T &:= \Big{\{ }f \in   L^{1}_{N+3}\big(\mathbb{R}^{d}\big) \ : \   
{\bf S1}) \ f\ge 0; \ 
\ \  {\bf S2})  \   \| f\|_{L^1_{N+3}} \le  {{c}_0(t)}:=(2{R}_*+1)e^{{C_*}t};\\
&\qquad\ \ \ {\bf S3})  \ \| f\|_{L^1} \ge {{c}_{1}(t)}:=\frac{R^*e^{{-C^*}t}}{2}.
\Big{\} \, } \nonumber
   \end{align}
where  the $c_0(t)$ is an increasing function and $c_1(t)$ is a decreasing function, so $S_t\subset S_{t'}$ for $0\le t\le t'\le T$

 Our main result is as follows.


%
%
%
%

\begin{theorem}\label{Theorem:Main}
Let $N>0$, and let $f_0(k) \in \mathcal{S}_0\cap  B_*(O,R_*)\backslash\overline{B_*(O,R^*)}$ for some $R^*>R_*>0$, where $B_*(O,R^*),B_*(O,R_*)$ is the ball centered at $O$ with radius $R^*,R_*$ of $L^1_{N+3}(\mathbb{R}^d)$.

Then the weak turbulence equation \eqref{WeakTurbulenceInitial} has a unique  strong solution $f(t,k)$ so that
\begin{equation}\label{the_theorem}
0\leq f(t,k)\in {C}\left([0,T); L^1_N(\RR^d)\right)\cap  {C}^1\left((0,T);L^1_N(\RR^d)\right).
\end{equation}

Moreover, $f(t,k)\in \mathcal{S}_T$ for all $t\in [0,T)$. 

Since $T$ can be chosen arbitrarily large, the weak turbulence equation \eqref{WeakTurbulenceInitial} has a unique global solution for all time $t>0$. 
\end{theorem}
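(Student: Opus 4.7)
My plan is a Picard iteration on the time-dependent invariant set $\mathcal{S}_T$, followed by a contraction argument. Starting from $f^0(t,k)\equiv f_0(k)$, define $f^{n+1}$ by the linearized problem
\begin{equation*}
\partial_t f^{n+1} + \bigl(\vartheta[f^n]+2\nu|k|^2\bigr)f^{n+1} = Q_\mathrm{gain}[f^n], \quad f^{n+1}(0,k)=f_0(k),
\end{equation*}
solved explicitly by Duhamel. Since $Q_\mathrm{gain}[f^n]\ge 0$ whenever $f^n\ge 0$ and $f_0\ge 0$, positivity (S1) is automatic. For the upper bound (S2) I would multiply the equation by $\omega_k^{N+3}$, integrate in $k$, and combine the gain estimate of Proposition \ref{Propo:C12} with the moment inequality \eqref{Propo:MomentsPropa:1} to get $\frac{d}{dt}\mathfrak{M}_{N+3}[f^{n+1}]\le C_1\mathfrak{M}_{N+4}[f^{n+1}]-C_2\mathfrak{M}_{N+5}[f^{n+1}]$; interpolation absorbs the highest moment into the dissipation and leaves $\frac{d}{dt}\mathfrak{M}_{N+3}[f^{n+1}]\le C_*\mathfrak{M}_{N+3}[f^{n+1}]$, which on integration yields $\mathfrak{M}_{N+3}[f^{n+1}](t)\le (2R_*+1)e^{C_*t}=c_0(t)$, the initial size $2R_*+1$ coming from $f_0\in B_*(O,R_*)$. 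For the lower bound (S3), Proposition \ref{Propo:MassLowerBound} gives $\mathfrak{M}_0[f^{n+1}](t)\ge e^{-C^*t}\|f_0\chi_{R_0}\|_{L^1}$; choosing $R_0$ large enough that $\|f_0\chi_{R_0}\|_{L^1}\ge R^*/2$, which is possible since $f_0\notin\overline{B_*(O,R^*)}$, yields the invariance $\mathcal{T}:\mathcal{S}_T\to\mathcal{S}_T$ with $\mathcal{T}: f^n\mapsto f^{n+1}$.

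To extract an actual solution and prove uniqueness I would use Proposition \ref{Propo:HolderC12}. Setting $g^n:=f^{n+1}-f^n$ and subtracting two consecutive Duhamel identities, the difference splits into contributions from $Q_\mathrm{gain}[f^n]-Q_\mathrm{gain}[f^{n-1}]$ and $(\vartheta[f^{n-1}]-\vartheta[f^n])f^n$; the H\"older/Lipschitz estimates of the proposition, valid uniformly on $\mathcal{S}_T$ thanks to the two-sided moment bounds already established, deliver $\|g^n(t)\|_{L^1_N}\le C_T\int_0^t\|g^{n-1}(s)\|_{L^1_N}\,ds$. The standard factorial iteration shows $\{f^n\}$ is Cauchy in $C([0,T];L^1_N)$; its limit $f\in\mathcal{S}_T$ is a mild solution, and continuity of $Q_\mathrm{gain}[f]-f\vartheta[f]-2\nu|k|^2f$ in $t$ then promotes $f$ into $C^1((0,T);L^1_N)$. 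Uniqueness follows by applying the same difference bound to two candidate strong solutions and invoking Gr\"onwall, and since $T$ is arbitrary the solution is global.

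The principal obstacle is that $\mathcal{L}_f$ depends nonlinearly on $f$ through $\Gamma_{k,k_1,k_2}=(|k|^2+|k_1|^2+|k_2|^2)\mathfrak{M}_0[f]$, so every estimate on $Q_\mathrm{gain}$ and $\vartheta$ requires a uniform positive lower bound on $\mathfrak{M}_0[f]$ to keep $\mathcal{L}_f$ from degenerating. This is precisely why the lower bound (S3) must be propagated along the iteration, why $C^*$ in \eqref{CONDITION:INITIALMOMENT} has to match the decay rate $4\nu R_0^2+8R_0$ produced by Proposition \ref{Propo:MassLowerBound}, and why invariance of $\mathcal{S}_T$ must be secured \emph{before} attempting the contraction. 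A secondary subtlety is the three-weight gap between the $L^1_{N+3}$ control needed to handle the $|k|^2$ factors inside $\Gamma$ and the $L^1_N$ space in which the contraction is set up; this gap is closed precisely by the dissipative term $-C_2\mathfrak{M}_{N+2}[f]$ appearing in the moment inequality.
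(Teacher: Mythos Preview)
Your Picard scheme has a genuine closure problem at the moment estimate (S2). In your linearized equation the gain term is $Q_{\mathrm{gain}}[f^n]$, not $Q_{\mathrm{gain}}[f^{n+1}]$; hence after multiplying by $\omega_k^{N+3}$ and integrating, Proposition~\ref{Propo:C12} gives
\[
\int_{\mathbb{R}^d} Q_{\mathrm{gain}}[f^n]\,\omega_k^{N+3}\,dk \ \le\ \frac{C}{\mathfrak{M}_0[f^n]}\,\mathfrak{M}_{N+4}[f^n],
\]
a moment of order $N+4$ of the \emph{previous} iterate. The dissipative term you rely on, $-C_2\mathfrak{M}_{N+5}[f^{n+1}]$, involves $f^{n+1}$ and cannot absorb $\mathfrak{M}_{N+4}[f^n]$ by interpolation. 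Since $\mathcal{S}_T$ only controls $L^1_{N+3}$, the bound $\mathfrak{M}_{N+4}[f^n]$ is not available uniformly in $n$, and the invariance $\mathcal{T}:\mathcal{S}_T\to\mathcal{S}_T$ is not established. This decoupling of gain (in $f^n$) from dissipation (in $f^{n+1}$) is exactly what Picard linearization breaks and what the paper's argument avoids. A secondary issue is your contraction step: Proposition~\ref{Propo:HolderC12} gives only a H\"older-$\tfrac12$ estimate in $L^1_N$, while the Lipschitz bound (Lemma~\ref{Lemma:Holder}) costs one extra moment, so the factorial iteration $\|g^n\|_{L^1_N}\le C_T\int_0^t\|g^{n-1}\|_{L^1_N}$ does not follow directly either.

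The paper proceeds differently: it invokes the abstract ODE Theorem~\ref{Theorem:ODE}, whose proof builds Euler polygonal approximations rather than Picard iterates. The sub-tangent condition $(\mathfrak{B})$ is checked on $w_R=f+h\widetilde Q[f_R]$ with $f_R=\chi_R f$; because the \emph{full} operator $Q[f_R]$ (gain minus loss) is applied to a single compactly supported function, the bound \eqref{Propo:MomentsPropa:1} closes at order $N+3$ without ever needing $\mathfrak{M}_{N+4}$. Uniqueness comes from the one-sided Lipschitz condition $(\mathfrak{C})$, where the extra moment produced by Lemma~\ref{Lemma:Holder} is absorbed by the viscous term $-2\nu|k|^2(f-g)$ inside the bracket $[\,\cdot\,,\,\cdot\,]$, something a naive two-sided $L^1_N$ estimate cannot do. In short, the paper's tangent/Euler construction keeps gain and loss coupled at every step, which is precisely what makes the moment hierarchy close.
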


The proof of Theorem \ref{Theorem:Main} relies on the following abstract Ordinary Differential Equations theorem in Banach spaces, which provides a framework to developed the  existence and uniqueness theory to space homogeneous Boltzmann type equations ranging from the classical Boltzmann equation for binary interaction, to  nonlocal kinetic model for rods alignment, to  quantum kinetic theory of bosonic cold gases \cite{Bressan, ABCL:2016, AlonsoGambaBinh, AlonsoGamba2018}.

 Applied to the initial value problem \eqref{WeakTurbulenceInitial}-\eqref{broaden2}, the framework is given by the following abstract existence and uniqueness theorem in Banach spaces along the lines proposed by A. Bressan in the unpublished notes \cite{Bressan}, whose application to the classical Boltzmann theory for hard potential and integrable angular cross section has been recently completed in \cite{AlonsoGamba2018}, as follows.
 
 Let     $E:=(E,\|\cdot\|)$ be a Banach space of real functions on $\mathbb{R}^d$, $(F,\|\cdot\|_*)$ be a Banach subspace of $E$ satisfying $\|u\|\le \|u\|_*$ $\forall u\in F$.  Denote by $B(O,r)$, $B_*(O,r)$   the balls centered at $O$ with radius $r>0$ with respect to the norm $\|\cdot\|$ and $\|\cdot\|_*$. Suppose that there exists a function $|\cdot|_*$ from $F$ to $\mathbb{R}$ such that
$$|u|_* \le \|u\|_*, \ \ \forall u\in F,\ \ \ |u+v|_* \le |u|_*+|v|_*, \ \ \forall u,v\in F,$$
$$\lambda|u|_* = |\lambda u|_*, \ \ \forall u\in F,\lambda\in\mathbb{R}_+.$$
where $C$ is some positive constant.  

\begin{theorem}\label{Theorem:ODE} Let $[0,T]$ be a time interval, and $\mathcal{S}_t$, $(t\in[0,T])$,   be a class of bounded and closed subset of $F$satisfying $S_t\subset S_{t'}$ for $0\le t\le t'$ and containing only non-negative functions and  
$$|u|_* = \|u\|_*, \ \ \forall u\in \mathcal{S}_T.$$
Moreover, for any sequence $\{u_n\}$ in $S_T$, 
\begin{equation}\label{LesbegueDominated}
\mbox{ If } u_n\geq 0, \|u_n\|_*\le C, \lim_{n\to\infty} \|u_n-u\| = 0, \mbox{ then } \lim_{n\to\infty} \|u_n-u\|_* = 0,
\end{equation}
Set $R_*>R^*>0$ and suppose  $\mathcal{Q}:\mathcal{S}_T\rightarrow E$ is an operator  satisfying the following properties: There exist $R_0,C_*,C^*>0$ such that: 
\begin{itemize}

\item [$(\mathfrak{A})$] H\"{o}lder continuity condition
\begin{equation*}
\big\|Q[u] - Q[v]\big\| \leq C\|u - v\|^{\beta},\quad \beta\in(0,1), \quad \forall\,u,v\in\mathcal{S}_T\,.
\end{equation*}
\item [$(\mathfrak{B})$] Sub-tangent condition

 For an element $u$ in $\mathcal{S}_T$,  there exists $\xi_u>0$ such that for $0<\xi<\xi_u$, there exists $z$ in $B(u+\xi \mathcal{Q}[u],\delta)\cap \mathcal{S}_T\backslash \{u+\xi\mathcal{Q}[u]\}$ for $\delta$ small enough. Moreover, 
\begin{equation}\label{Coercivity}\begin{aligned}
|z-u|_* &\ \le \frac{C_*\xi}{2}\|u\|_*,\\
\chi_{R_0}\frac{z-u}{\xi} &\ \geq -\frac{C^*\chi_{R_0}}{2}u,
\end{aligned}
\end{equation}
where $\chi_{R_0}$ is the characteristic function of the ball $B_{\mathbb{R}^d}(0,R_0)$ of $\mathbb{R}^d$.

\item [$(\mathfrak{C})$] one-side Lipschitz condition
\begin{equation*}
\big[ Q[u] - Q[v], u - v \big] \leq C\|u - v\|,\qquad \forall\,u,v\in\mathcal{S}_T\,,
\end{equation*}
where $$\big[ \varphi,\phi \big]: = \lim_{h\rightarrow 0^{-}}h^{-1}\big(\| \phi + h\varphi \| - \| \phi \| \big).$$

\end{itemize}
Moreover, $\mathcal{S}_T\cap B\left(0,\frac{R^*e^{-C^*T}}{2}\right)=\emptyset$ and $\mathcal{S}_T\subset B(0,(2R_*+1)e^{C_*T})$.

Then the equation 
\begin{equation}\label{Theorem_ODE_Eq}
\partial_t u=Q[u] \mbox{ on } [0,T)\times E,~~~~u(0)=u_0 \in \mathcal{S}_0\cap B_*(O,R_*)\backslash\overline{B_*(O,R^*)}, 
\end{equation}
 has a unique solution $$u \in C^1((0,T),E)\cap C\left([0,T),\mathcal{S}_T\right).$$
\end{theorem}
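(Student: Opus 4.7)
My strategy is to establish existence through an Euler-type approximation scheme adapted to the sub-tangent condition $(\mathfrak{B})$, then pass to the limit in the weaker norm using the Hölder estimate $(\mathfrak{A})$, and finally deduce uniqueness by applying the one-sided Lipschitz inequality $(\mathfrak{C})$ to the Dini derivative of $\|u-v\|$. Fix a step size $\xi>0$ and a uniform partition $0=t_0<t_1<\cdots<t_n=T$. Set $u_0^\xi:=u_0$; at each step $i$, invoke $(\mathfrak{B})$ for $u_i^\xi\in\mathcal{S}_T$ to produce $u_{i+1}^\xi:=z_i\in\mathcal{S}_T$ within $E$-distance at most $\delta$ of $u_i^\xi+\xi Q[u_i^\xi]$. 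Piecewise-affine interpolation in $i$ gives a continuous curve $U^\xi:[0,T]\to F$.

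The two inequalities in \eqref{Coercivity} are designed precisely to propagate membership in $\mathcal{S}_t$. Using $|u|_*=\|u\|_*$ on $\mathcal{S}_T$ together with subadditivity and positive homogeneity of $|\cdot|_*$, the first one yields $\|u_{i+1}^\xi\|_*\le (1+C_*\xi/2)\|u_i^\xi\|_*$, and a discrete Gronwall produces a $\|\cdot\|_*$-bound compatible with (S2). The second one iterates pointwise to $\chi_{R_0}u_i^\xi\ge(1-C^*\xi/2)^i\chi_{R_0}u_0$, and integration over $B(O,R_0)$ delivers the $L^1$ lower bound in (S3); positivity (S1) is automatic because every $z_i\in\mathcal{S}_T$. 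Next, $(\mathfrak{A})$ controls $\|Q[U^\xi(t)]-Q[U^\zeta(s)]\|$ by $\|U^\xi(t)-U^\zeta(s)\|^\beta$, and a telescoping argument across refinements of the partition shows that $\{U^\xi\}$ is Cauchy in $C([0,T],E)$. Its limit $u$ satisfies $u(t)=u_0+\int_0^t Q[u(s)]\,ds$, whence $u\in C^1((0,T),E)$. The uniform $\|\cdot\|_*$-bound on the approximations together with \eqref{LesbegueDominated} upgrades $E$-convergence to $\|\cdot\|_*$-convergence, and closedness of $\mathcal{S}_T$ in $F$ then gives $u(t)\in\mathcal{S}_T$ for each $t$. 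For uniqueness, if $u$ and $v$ are two solutions, the map $t\mapsto\|u(t)-v(t)\|$ has left Dini derivative at most $[Q[u]-Q[v],u-v]\le C\|u-v\|$ by $(\mathfrak{C})$, and Gronwall with $u(0)=v(0)$ forces $u\equiv v$.

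The main obstacle I anticipate is the sub-tangent step $(\mathfrak{B})$ itself when this abstract theorem is later specialized to the wave turbulence equation \eqref{WeakTurbulenceInitialReformed}. The naive candidate $u+\xi Q[u]$ need not lie in $\mathcal{S}_T$: the loss term $f\vartheta[f]$ may locally drive $u+\xi Q[u]$ out of the positive cone, and the implicit dependence of the resonance broadening $\Gamma_{k,k_1,k_2}$ on $f$ couples the upper-moment bound (S2) and the lower-mass bound (S3) in a nontrivial way. Constructing a $z\in\mathcal{S}_T$ close to $u+\xi Q[u]$ in $E$ while simultaneously satisfying both inequalities of \eqref{Coercivity} is the PDE-specific step; the abstract scheme packages every other estimate, so that verifying $(\mathfrak{B})$ using Propositions \ref{Propo:C12} and \ref{Propo:MassLowerBound} and the lower bound $e^{-(2\nu R_0^2+4R_0)t}\|f_0\chi_{R_0}\|_{L^1}$ on $\mathfrak{M}_0[f]$ is essentially the only remaining technical work.
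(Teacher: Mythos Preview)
Your overall architecture---Euler polygons built from the sub-tangent condition, propagation of the $\mathcal{S}_t$ constraints via \eqref{Coercivity}, passage to the limit, and uniqueness by $(\mathfrak{C})$---is the same as the paper's. But there is a genuine gap in the convergence step. You write that $(\mathfrak{A})$ ``controls $\|Q[U^\xi(t)]-Q[U^\zeta(s)]\|$ by $\|U^\xi(t)-U^\zeta(s)\|^\beta$, and a telescoping argument \ldots shows that $\{U^\xi\}$ is Cauchy.'' H\"older continuity with $\beta<1$ does not yield a Gronwall-type closure: the resulting inequality is $\frac{d}{dt}\|U^\xi-U^\zeta\|\lesssim \|U^\xi-U^\zeta\|^\beta+o(1)$, which cannot force $\|U^\xi-U^\zeta\|\to 0$ (indeed, H\"older right-hand sides admit non-unique solutions, so different Euler polygons need not converge to the same limit). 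In the paper, $(\mathfrak{A})$ is used only for two soft facts---boundedness of $Q$ on $\mathcal{S}_T$ and uniform continuity to make each polygon an $\epsilon$-approximate solution $\|\dot\Theta-Q[\Theta]\|\le\epsilon$---while the Cauchy estimate between two $\epsilon$-approximate solutions is obtained from $(\mathfrak{C})$: $\frac{d}{dt}\|u^\epsilon-v^\epsilon\|\le [Q[u^\epsilon]-Q[v^\epsilon],u^\epsilon-v^\epsilon]+2\epsilon\le C\|u^\epsilon-v^\epsilon\|+2\epsilon$, giving $\|u^\epsilon-v^\epsilon\|\le 2\epsilon e^{CT}/C$. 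You already invoke $(\mathfrak{C})$ correctly for uniqueness; you need to invoke it a second time for convergence of the scheme.

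A secondary issue: you fix a uniform step size $\xi$, but $(\mathfrak{B})$ only supplies $\xi_u$ depending on the current state $u$, so a uniform partition is not a priori admissible. The paper handles this by building the polygon piece by piece with state-dependent $\xi$ and then running a continuation argument: if the polygon is defined on intervals $[\tau_n,\tau_{n+1}]$ with $\tau_n\nearrow\tau<T$, the uniform bound $\|\dot\Theta\|\le C_Q+\epsilon$ gives a limit $\Theta(\tau)$ in $E$, \eqref{LesbegueDominated} upgrades this to $\|\cdot\|_*$, closedness of $\mathcal{S}_\tau$ places $\Theta(\tau)\in\mathcal{S}_\tau$, and one continues past $\tau$. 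Your final paragraph about the difficulty of verifying $(\mathfrak{B})$ for the concrete equation is a fair remark, but it concerns Theorem~\ref{Theorem:Main}, not the abstract Theorem~\ref{Theorem:ODE} you are proving here.
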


We end this introduction by giving the structure of the paper. In Section \ref{Sec:MomentPropa}, we provide an a priori estimate on the $L^1_N$ norm of the solution.  The H\"older continuity of the collision operator will be established in Section \ref{Sec:HolderEstimate}. The proof of Theorem \ref{Theorem:Main} is given in Section \ref{Sec:Main}. The proof of Theorem \ref{Theorem:ODE} is given in Section 5.

Throughout the paper, we normally denote by $C$, $C'$ universal constants that may vary from line to line.


%
%
%

\section{A priori estimate}\label{Sec:MomentPropa}
In this section, we shall derive uniform estimates on the $N$-th moment of $f$.
\subsection{Preliminaries}
The following lemma represents the weak formulation for the collision operator
\begin{lemma}\label{Lemma:WeakFormulation}
There holds 
$$
\begin{aligned}
\int_{\RR^d}Q[f](t,k) \varphi(k) \; dk  \ = &\  \iiint_{\mathbb{R}^{3d}} R_{k,k_1,k_2} [f]  \Big[ \varphi(k) - \varphi(k_1) - \varphi(k_2)\Big] dk dk_1dk_2 
\end{aligned}
$$
for any test functions $\varphi$ so that the integrals are well-defined. 
\end{lemma}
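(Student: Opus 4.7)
The plan is a direct symmetrization argument: multiply the definition of $Q[f]$ by $\varphi(k)$, integrate in $k$, and then rename the integration variables in the two ``loss-type'' contributions so that every term is expressed as a triple integral against the same kernel $R_{k,k_1,k_2}[f]$.

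More concretely, starting from \eqref{def-Qf} I write
\begin{equation*}
\int_{\RR^d} Q[f](t,k)\,\varphi(k)\,dk \;=\; J_1 \;-\; J_2 \;-\; J_3,
\end{equation*}
where $J_1=\iiint R_{k,k_1,k_2}[f]\,\varphi(k)\,dk\,dk_1\,dk_2$, and $J_2$, $J_3$ are the analogous triple integrals for $R_{k_1,k,k_2}[f]\,\varphi(k)$ and $R_{k_2,k,k_1}[f]\,\varphi(k)$ respectively. In $J_2$ I swap the dummy labels $k\leftrightarrow k_1$, and in $J_3$ I swap $k\leftrightarrow k_2$. The goal is to check that, after each swap, the kernel in the integrand becomes exactly $R_{k,k_1,k_2}[f]$ again, with $\varphi(k)$ replaced by $\varphi(k_1)$ and $\varphi(k_2)$, respectively.

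The verification uses the symmetries built into the definitions: $V_{k,k_1,k_2}=\mathfrak{C}(|k||k_1||k_2|)^{1/2}$ is fully symmetric in its three arguments; the Dirac factor $\delta(k-k_1-k_2)$ is invariant under any permutation of $(k,k_1,k_2)$ together with the appropriate sign convention, since in the relabeled integral the support constraint becomes $k=k_1+k_2$; the resonance broadening $\Gamma_{k,k_1,k_2}=(|k|^2+|k_1|^2+|k_2|^2)\mathfrak{M}_0[f]$ is symmetric, and $\mathcal L_f(\omega_k-\omega_{k_1}-\omega_{k_2})$ depends only on the square of $\omega_k-\omega_{k_1}-\omega_{k_2}$ so it is symmetric in $(k_1,k_2)$; finally, the algebraic factor $f_1f_2-ff_1-ff_2$ is also symmetric in $(k_1,k_2)$. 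These symmetries are precisely what makes the two swaps legitimate and yield $J_2=\iiint R_{k,k_1,k_2}[f]\,\varphi(k_1)\,dk\,dk_1\,dk_2$ and $J_3=\iiint R_{k,k_1,k_2}[f]\,\varphi(k_2)\,dk\,dk_1\,dk_2$. Combining gives the claimed identity.

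There is no real obstacle: the argument is a bookkeeping exercise, and the only mild issue is justifying the interchange of integrations (Fubini) and the relabeling of the distributional Dirac factor, both of which are standard as soon as $\varphi$ and $f$ are chosen so that each triple integral is absolutely convergent, which is exactly the hypothesis ``test functions $\varphi$ so that the integrals are well-defined.'' I would state this integrability assumption explicitly at the start of the proof and then perform the two changes of variables.
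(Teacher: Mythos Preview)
Your proposal is correct and follows exactly the same approach as the paper: write the integral as three pieces and relabel $k\leftrightarrow k_1$ and $k\leftrightarrow k_2$ in the second and third, using the symmetry of $R_{k,k_1,k_2}$ in its last two arguments. Your version is in fact more detailed than the paper's, which simply states the change of variables and declares the result.
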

\begin{proof} By definition, the  integral of the product of $Q[f]$ and $\varphi$ is written
$$
\begin{aligned}
\int_{\RR^d}Q[f](t,k) \varphi(k) \; dk  \ = &\ \iiint_{\mathbb{R}^{3d}} \Big[ R_{k,k_1,k_2} - R_{k_1,k,k_2} - R_{k_2,k,k_1} \Big] \varphi(k) dk dk_1dk_2 .
\end{aligned}
$$
By employing the change of variables $k\leftrightarrow k_1$, $k\leftrightarrow k_2$ in the first integral on the right, the lemma then follows.  
\end{proof}
In this paper, we also need the following H\"older-type inequality.
\begin{lemma}\label{lem-Holder} For $N>n>p$, and $g\geq 0$ there holds
\begin{equation}\label{Holder} \mathfrak{M}_n[g] \le \mathfrak{M}_p^{\frac{N-n}{N-p}}[g] \mathfrak{M}_{N}^{\frac{n-p}{N-p}}[g],\end{equation}
where $g$ is such that all of the integrals are well-defined. 
\end{lemma}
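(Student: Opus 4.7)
The plan is to recognize this as the standard log-convexity of moments, which follows from a single application of Hölder's inequality on $\mathbb{R}^d$ after writing the integrand of $\mathfrak{M}_n[g]$ as a product of two factors corresponding to the weights $\cE_k^p$ and $\cE_k^N$.

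First I would fix the convex-combination exponent. Set
\[
\alpha = \frac{N-n}{N-p}, \qquad 1-\alpha = \frac{n-p}{N-p},
\]
both of which lie in $(0,1)$ because $p<n<N$. A direct calculation gives $p\alpha + N(1-\alpha)=n$, so that for every $k\in\mathbb{R}^d$ one has the pointwise identity
\[
\cE_k^{n} g(k) = \bigl(\cE_k^{p} g(k)\bigr)^{\alpha} \bigl(\cE_k^{N} g(k)\bigr)^{1-\alpha},
\]
using $g(k)=g(k)^{\alpha}g(k)^{1-\alpha}$ (which is where $g\ge 0$ enters, so that the fractional powers are well-defined).

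Next I would integrate and apply Hölder's inequality with conjugate exponents $1/\alpha$ and $1/(1-\alpha)$, whose reciprocals sum to $1$ by construction. This yields
\[
\mathfrak{M}_n[g] = \int_{\mathbb{R}^d} \bigl(\cE_k^{p} g(k)\bigr)^{\alpha} \bigl(\cE_k^{N} g(k)\bigr)^{1-\alpha}\,dk \le \Bigl(\int_{\mathbb{R}^d} \cE_k^{p} g(k)\,dk\Bigr)^{\alpha} \Bigl(\int_{\mathbb{R}^d} \cE_k^{N} g(k)\,dk\Bigr)^{1-\alpha},
\]
which is precisely $\mathfrak{M}_p^{(N-n)/(N-p)}[g]\,\mathfrak{M}_N^{(n-p)/(N-p)}[g]$.

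There is really no obstacle here beyond checking the arithmetic of the exponents; the hypothesis $N>n>p$ is exactly what guarantees $\alpha\in(0,1)$ and hence a valid Hölder pairing, and the nonnegativity of $g$ ensures that the factorization $g=g^{\alpha}g^{1-\alpha}$ makes sense and that the inequality goes in the direction stated (no absolute values needed). Implicitly we also use that $\cE_k>0$, which is immediate from $\cE_k=\sqrt{\lambda_1+\lambda_2|k|^2}$ with $\lambda_1,\lambda_2>0$, so the weights $\cE_k^p$, $\cE_k^n$, $\cE_k^N$ are all well-defined and positive. Assuming the moments on the right-hand side are finite (otherwise the bound is trivially true), the estimate follows.
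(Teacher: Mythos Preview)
Your proof is correct and is essentially the same approach as the paper's: both reduce the moment bound to a single application of H\"older's inequality with exponents $\frac{N-p}{N-n}$ and $\frac{N-p}{n-p}$. You simply spell out the choice of $\alpha$ and the factorization $g = g^\alpha g^{1-\alpha}$ more explicitly than the paper does.
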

\begin{proof}
The lemma follows from the definition of $\mathfrak{M}_n$ and the following H\"older inequality
\begin{eqnarray*}
\int_{\mathbb{R}^d} g(k)\cE_k^n dk &\le& \left( \int_{\mathbb{R}^d} g(k)\cE_k^p dk \right)^{\frac{N-n}{N-p}} \left( \int_{\mathbb{R}^d} g(k)\cE_k^N dk\right)^{\frac{n-p}{N-p}} .
\end{eqnarray*}
\end{proof}

\subsection{Estimate of the collision operator}
The main result of this subsection is the following estimate on the gain part of the collision operator $Q[g]$ as defined in \eqref{GainLoss} and \eqref{Qgain}. 

\begin{lemma}\label{Propo:C12} Let $N\geq 0 $. For any positive  function $g\in L^1_{N+1}$, there exists a constant $\mathcal{C}C(\lambda_1,\lambda_2,N)$, depending on $\lambda_1,\lambda_2,N$, such that the following holds 
\begin{equation}\label{Propo:C12:1}
 \int_{\mathbb{R}^d}Q_{\mathrm{gain}}[g](k)\cE^N_k \; dk\le   \frac{\mathcal{C}(\lambda_1,\lambda_2,N)\mathfrak{M}_{N+1}[g]}{\mathfrak{M}_0[g]}.
\end{equation}
\end{lemma}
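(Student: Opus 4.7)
The proof I have in mind rests on three ingredients: the pointwise envelope $\mathcal{L}_g(\zeta)\le 1/\Gamma_{k,k_1,k_2}$ following directly from $\mathcal{L}_g(\zeta)=\Gamma/(\zeta^2+\Gamma^2)$, the sub-additivity of $\omega_k^N$ along the momentum-conservation surface, and the moment interpolation of Lemma \ref{lem-Holder}. First I would combine the above envelope with the identity $\Gamma_{k,k_1,k_2}=(|k|^2+|k_1|^2+|k_2|^2)\,\mathfrak{M}_0[g]$ (from \eqref{Gamma} and \eqref{DampingTerm}) and with $|V_{k,k_1,k_2}|^2=\mathfrak{C}^2|k||k_1||k_2|$ to obtain the pointwise bound
$$|V_{k,k_1,k_2}|^2\,\mathcal{L}_g(\omega_k-\omega_{k_1}-\omega_{k_2})\;\le\;\frac{\mathfrak{C}^2\,|k||k_1||k_2|}{(|k|^2+|k_1|^2+|k_2|^2)\,\mathfrak{M}_0[g]}\;\le\;\frac{\mathfrak{C}^2\,\min\{|k|,|k_1|,|k_2|\}}{\mathfrak{M}_0[g]},$$
where the last step is the elementary inequality $|k||k_1||k_2|\le \min(|k|,|k_1|,|k_2|)\cdot(|k|^2+|k_1|^2+|k_2|^2)$, proved by using AM--GM on the two largest moduli. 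This inequality is the source of the $1/\mathfrak{M}_0[g]$ factor in the claimed bound.

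\textbf{Execution.} Next I would split $Q_{\mathrm{gain}}$ into its three contributions in \eqref{Qgain} and treat each by the same template. On the support of $\delta(k-k_1-k_2)$ one has $|k|^2\le 2(|k_1|^2+|k_2|^2)$, so $\omega_k^2=\lambda_1+\lambda_2|k|^2\le 2\omega_{k_1}^2+2\omega_{k_2}^2$ and therefore $\omega_k^N\le C_N(\omega_{k_1}^N+\omega_{k_2}^N)$; an entirely analogous splitting $\omega_{k_1}^N\le C_N(\omega_k^N+\omega_{k_2}^N)$ applies on $\{k_1=k+k_2\}$. For each of the three integrals I then resolve the delta function by Fubini (integrating out $k$ in the first term, $k_1$ in the other two), bound $\min\{|k|,|k_1|,|k_2|\}$ by the modulus of one of the two remaining variables, and use $|k|\le \omega_k/\sqrt{\lambda_2}$ together with $1\le \omega_k/\sqrt{\lambda_1}$ to convert every monomial in $|k|$ into a power of $\omega_k$ with a harmless constant $C(\lambda_1,\lambda_2)$. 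After this reduction each term has the shape $\mathfrak{M}_N[g]\,\mathfrak{M}_1[g]/\mathfrak{M}_0[g]$ or $\mathfrak{M}_{N+1}[g]\,\mathfrak{M}_0[g]/\mathfrak{M}_0[g]$. Applying Lemma \ref{lem-Holder} with $p=0$, $n=1$, and outer index $N+1$ gives $\mathfrak{M}_N[g]\mathfrak{M}_1[g]\le \mathfrak{M}_0[g]\mathfrak{M}_{N+1}[g]$, so all surviving contributions consolidate into $\mathcal{C}(\lambda_1,\lambda_2,N)\mathfrak{M}_{N+1}[g]/\mathfrak{M}_0[g]$, which is the asserted estimate \eqref{Propo:C12:1}.

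\textbf{Main obstacle.} I expect the delicate step to be the second and third gain contributions, where the delta constraint is $k_1=k+k_2$ and so the outer weight $\omega_k^N$ sits on the ``output'' variable rather than an ``input'' variable. One must choose carefully which factor of $\min\{|k|,|k_1|,|k_2|\}$ to extract: pulling out $|k|$ would pair with $\omega_k^N$ and force an uncontrolled $\mathfrak{M}_{N+2}$, whereas pulling out $|k_2|$ pairs with $g_2$ and produces only $\mathfrak{M}_1[g]$, keeping the accounting at order $N+1$. The other bookkeeping point is to verify that the $gg_1$ and $g_1g_2$ subterms of the second integral in \eqref{Qgain}, after a substitution $k_1\mapsto k+k_2$ or $k_2\mapsto k_1-k$, both reduce to the same two canonical products $\mathfrak{M}_N\mathfrak{M}_1$ and $\mathfrak{M}_{N+1}\mathfrak{M}_0$ --- once this is done, Lemma \ref{lem-Holder} absorbs everything and the proof concludes.
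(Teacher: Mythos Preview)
Your proposal is correct and follows essentially the same route as the paper: bound $\mathcal{L}_g\le 1/\Gamma_{k,k_1,k_2}$, use $\Gamma_{k,k_1,k_2}=\mathfrak{M}_0[g](|k|^2+|k_1|^2+|k_2|^2)$ to produce the factor $1/\mathfrak{M}_0[g]$, apply the sub-additivity $\omega_k^N\le C(\omega_{k_1}^N+\omega_{k_2}^N)$ on the momentum-conservation surface, and convert $|k|\le \omega_k/\sqrt{\lambda_2}$.

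The only genuine difference is in the last bookkeeping step. The paper always extracts from $\dfrac{|k||k_1||k_2|}{|k|^2+|k_1|^2+|k_2|^2}$ the variable that is \emph{determined} by the delta (e.g.\ $|k_1+k_2|\le|k_1|+|k_2|$ in $I_1$, using $|k_1|^2+|k_2|^2\ge 2|k_1||k_2|$), so every surviving product is of the form $|k_i|\,\omega_{k_j}^N\le C(\omega_{k_1}^{N+1}+\omega_{k_2}^{N+1})$ and one lands directly on $\mathfrak{M}_{N+1}[g]\,\mathfrak{M}_0[g]$ without ever invoking Lemma~\ref{lem-Holder}. Your $\min\{|k|,|k_1|,|k_2|\}$ extraction is equally valid but naturally produces the mixed moment $\mathfrak{M}_N[g]\,\mathfrak{M}_1[g]$, which you then reduce via Lemma~\ref{lem-Holder}; this is a small detour the paper avoids.

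One correction to your ``obstacle'' paragraph: there is no danger of $\mathfrak{M}_{N+2}$ appearing. On the constraint $k_1=k+k_2$, extracting $|k|$ in the $gg_1$ subterm gives $|k|\,\omega_k^N\,g(k)\le C\omega_k^{N+1}g(k)$, hence $\mathfrak{M}_{N+1}[g]\,\mathfrak{M}_0[g]$; in the $g_1g_2$ subterm, $|k|\,\omega_k^N\le C\omega_{k_1-k_2}^{N+1}\le C(\omega_{k_1}^{N+1}+\omega_{k_2}^{N+1})$, again only $\mathfrak{M}_{N+1}[g]\,\mathfrak{M}_0[g]$. So the choice of which factor to pull out is in fact harmless, and the step you flagged as delicate is routine.
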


\begin{remark}
The proof below is based on the fact that the resonance broadening width $\theta$ defined in \eqref{NearResonantManifold} is chosen proportional to 
$$(|k|^2+|k_1|^2+|k_2|^2)\int_{\mathbb{R}^3}f(t,k)dk,$$
as discussed in the introduction.
\end{remark}
\begin{proof} By the same argument used to obtain the weak formulation proved in Lemma \ref{Lemma:WeakFormulation}, the following identity holds true
$$
\begin{aligned}
\int_{\RR^d}Q[g](k) \cE_k^N \; dk  \ = &\  \iiint_{\mathbb{R}^{3d}} \tilde{R}_{k,k_1,k_2} [g]  \Big [\cE^N_{k}-\cE^N_{k_1}-\cE^N_{k_2} \Big] dk dk_1dk_2, 
\end{aligned}
$$
where
$$\tilde{R}_{k,k_1,k_2} [g]:=   |V_{k,k_1,k_2}|^2\delta(k-k_1-k_2)\mathcal{L}(\omega_k -\omega_{k_1}-\omega_{k_2})(g_1g_2+gg_1+gg_2).$$

And the integration of the gain term in multiplying with the test function $\cE^N_{k}$ is then
$$
\begin{aligned}
& \ \int_{\RR^d}Q_{\mathrm{gain}}[g](k) \cE_k^N \; dk=  \\
= &\  C\iiint_{\mathbb{R}^{3d}} \delta(k-k_1-k_2)\frac{\mathfrak{M}_0[g](|k|^2+|k_1|^2+|k_2|^2)|k||k_1||k_2|}{(\omega_k -\omega_{k_1}-\omega_{k_2})^2+\mathfrak{M}_0[g]^2(|k|^2+|k_1|^2+|k_2|^2)^2}\times\\
 &\ \times g_1g_2\cE^N_{k}dk dk_1dk_2\\
 &\  + C\iiint_{\mathbb{R}^{3d}} \delta(k_1-k-k_2)\frac{\mathfrak{M}_0[g](|k|^2+|k_1|^2+|k_2|^2)|k||k_1||k_2|}{(\omega_{k_1} -\omega_{k}-\omega_{k_2})^2+\mathfrak{M}_0[g]^2(|k|^2+|k_1|^2+|k_2|^2)^2}\times\\
 &\ \times (gg_1+g_1g_2)  \cE^N_{k}dk dk_1dk_2,
\end{aligned}
$$
which, by the change of variable $(k,k_1)\to (k_1,k)$ in the second integral, whose Jacobian is 1, could be expressed as
$$
\begin{aligned}
& \ \int_{\RR^d}Q_{\mathrm{gain}}[g](k) \cE_k^N \; dk=  \\
= &\  C\iiint_{\mathbb{R}^{3d}} \delta(k-k_1-k_2)\frac{\mathfrak{M}_0[g](|k|^2+|k_1|^2+|k_2|^2)|k||k_1||k_2|}{(\omega_k -\omega_{k_1}-\omega_{k_2})^2+\mathfrak{M}_0[g]^2(|k|^2+|k_1|^2+|k_2|^2)^2}\times\\
 &\ \times g_1g_2\cE^N_{k}dk dk_1dk_2\\
 &\  + C\iiint_{\mathbb{R}^{3d}} \delta(k-k_1-k_2)\frac{\mathfrak{M}_0[g](|k|^2+|k_1|^2+|k_2|^2)|k||k_1||k_2|}{(\omega_{k} -\omega_{k_1}-\omega_{k_2})^2+\mathfrak{M}_0[g]^2(|k|^2+|k_1|^2+|k_2|^2)^2}\times\\
 &\ \times (gg_1+gg_2)  \cE^N_{k_1}dk dk_1dk_2.
\end{aligned}
$$

By the symmetry of $k_1$ and $k_2$ in the second integral, 
$$
\begin{aligned}
& \ \int_{\RR^d}Q_{\mathrm{gain}}[g](k) \cE_k^N \; dk=  \\
= &\  C\iiint_{\mathbb{R}^{3d}} \delta(k-k_1-k_2)\frac{\mathfrak{M}_0[g](|k|^2+|k_1|^2+|k_2|^2)|k||k_1||k_2|}{(\omega_k -\omega_{k_1}-\omega_{k_2})^2+\mathfrak{M}_0[g]^2(|k|^2+|k_1|^2+|k_2|^2)^2}\times\\
 &\ \times g_1g_2\cE^N_{k}dk dk_1dk_2\\
 &\  + C\iiint_{\mathbb{R}^{3d}} \delta(k-k_1-k_2)\frac{\mathfrak{M}_0[g](|k|^2+|k_1|^2+|k_2|^2)|k||k_1||k_2|}{(\omega_{k} -\omega_{k_1}-\omega_{k_2})^2+\mathfrak{M}_0[g]^2(|k|^2+|k_1|^2+|k_2|^2)^2}\times\\
 &\ \times gg_1\left[\cE^N_{k_1}+\cE^N_{k_2}\right]dk dk_1dk_2.
\end{aligned}
$$

Let us now look at the fractional term in the above integral
$$K:=\frac{\mathfrak{M}_0[g](|k|^2+|k_1|^2+|k_2|^2)|k||k_1||k_2|}{(\omega_k -\omega_{k_1}-\omega_{k_2})^2+\mathfrak{M}_0[g]^2(|k|^2+|k_1|^2+|k_2|^2)^2}.$$
Since the denominator $(\omega_k -\omega_{k_1}-\omega_{k_2})^2+\mathfrak{M}_0^2(|k|^2+|k_1|^2+|k_2|^2)^2$ is greater than $\mathfrak{M}_0[g]^2(|k|^2+|k_1|^2+|k_2|^2)^2$, the whole fraction can be bounded as
 $$K\le \frac{|k||k_1||k_2|}{\mathfrak{M}_0[g](|k|^2+|k_1|^2+|k_2|^2)},$$
which leads to the following 
$$
\begin{aligned}
& \ \int_{\RR^d}Q_{\mathrm{gain}}[g](k) \cE_k^N \; dk  \\
 \le &\  C\iiint_{\mathbb{R}^{3d}} \delta(k-k_1-k_2)\frac{|k||k_1||k_2|}{\mathfrak{M}_0[g](|k|^2+|k_1|^2+|k_2|^2)}g_1g_2\cE^N_{k} dk dk_1dk_2\\
 & + \  C\iiint_{\mathbb{R}^{3d}} \delta(k-k_1-k_2)\frac{|k||k_1||k_2|}{\mathfrak{M}_0[g](|k|^2+|k_1|^2+|k_2|^2)}gg_1\Big [\cE^N_{k_1}+\cE^N_{k_2} \Big] dk dk_1dk_2,
\end{aligned}
$$
which can be rewritten in the following equivalent form, with the right hand side being the sum of $I_1$ and $I_2$
\begin{equation}\label{Propo:C12:E1}
\begin{aligned}
 \int_{\RR^d}Q_{\mathrm{gain}}[g](k) \cE_k^N \; dk  
 \le &\  I_1+I_2,
\end{aligned}
\end{equation}
where
\begin{equation}\label{Propo:C12:E2}
\begin{aligned}
I_1 := & \ C\iiint_{\mathbb{R}^{3d}} \delta(k-k_1-k_2)\frac{|k||k_1||k_2|}{\mathfrak{M}_0[g](|k|^2+|k_1|^2+|k_2|^2)}g_1g_2  \cE^N_{k}dk dk_1dk_2\\
I_2 := & \ C\iiint_{\mathbb{R}^{3d}} \delta(k-k_1-k_2)\frac{|k||k_1||k_2|}{\mathfrak{M}_0[g](|k|^2+|k_1|^2+|k_2|^2)}gg_1  \Big [\cE^N_{k_1}+\cE^N_{k_2} \Big] dk dk_1dk_2.
\end{aligned}
\end{equation}
Let us first estimate $I_1$. By the resonant condition  $k = k_1 + k_2$,  we have
$$\cE_k =\sqrt{\lambda_1+\lambda_2 |k|^2} \le \sqrt{\lambda_1+\lambda_2 (|k_1|+|k_2|)^2}< 2\sqrt{\lambda_1+\lambda_2 |k_1|^2} + 2\sqrt{\lambda_1+\lambda_2 |k_2|^2} = 2\cE_{k_1} +2\cE_{k_2},$$ 
which, thanks to the Cauchy-Schwarz inequality, leads to
$$\cE_k^N  \le C(\lambda_1,\lambda_2,N)(\cE_{k_1}^N + \cE_{k_2}^N),$$ 
where $C(\lambda_1,\lambda_2,N)$ is some  constant depending on $\lambda_1,\lambda_2,N$.\\
Thus, we obtain 
$$
\begin{aligned}
I_1 \le & \ C(\lambda_1,\lambda_2,N)\iiint_{\mathbb{R}^{3d}} \delta(k-k_1-k_2)\frac{|k||k_1||k_2|}{\mathfrak{M}_0[g](|k|^2+|k_1|^2+|k_2|^2)}g_1g_2  \Big [\cE^N_{k_1}+\cE^N_{k_2} \Big] dk dk_1dk_2.
\end{aligned}
$$
Taking into account the definition of the Dirac function $\delta(k-k_1-k_2)$ the above integral on $\mathbb{R}^{3d}$ can be reduced to an integral on $\mathbb{R}^{2d}$ only
$$
\begin{aligned}
I_1 \le & \ C(\lambda_1,\lambda_2,N)\iint_{\mathbb{R}^{2d}} \frac{|k_1+k_2||k_1||k_2|}{\mathfrak{M}_0[g](|k|^2+|k_1|^2+|k_2|^2)}g_1g_2  \Big [\cE^N_{k_1}+\cE^N_{k_2} \Big] dk_1dk_2.
\end{aligned}
$$
Due to the inequality $|k_1+k_2|^2+|k_1|^2+|k_2|^2\ge 2 |k_1||k_2|$, the kernel of the above integral can be bounded as
$$\frac{|k_1+k_2||k_1||k_2|}{|k_1+k_2|^2+|k_1|^2+|k_2|^2}\le \frac{|k_1+k_2|}{2} \le \frac{|k_1|+|k_2|}{2},$$
yielding 
$$
\begin{aligned}
I_1 \le & \ \frac{C(\lambda_1,\lambda_2,N)}{\mathfrak{M}_0[g]}\iint_{\mathbb{R}^{2d}} {(|k_1|+|k_2|)}g_1g_2  \Big [\cE^N_{k_1}+\cE^N_{k_2} \Big] dk_1dk_2.
\end{aligned}
$$
Observing that
$$|k_1|\le \frac{\cE_{k_1}}{\sqrt{\lambda_2}}, \ \ \  |k_2|\le \frac{\cE_{k_2}}{\sqrt{\lambda_2}},$$
we can bound
$$(|k_1|+|k_2|)\Big [\cE^N_{k_1}+\cE^N_{k_2} \Big]\le C(\cE_{k_1}+\cE_{k_2})\Big [\cE^N_{k_1}+\cE^N_{k_2} \Big] \le C\Big [\cE^{N+1}_{k_1}+\cE^{N+1}_{k_2} \Big],$$
which yields the following estimate on $I_1$ in terms of  the functional defined in \eqref{Def:MomentOrderk}
\begin{equation}\label{Propo:C12:E3}
\begin{aligned}
I_1 \le & \ \frac{C(\lambda_1,\lambda_2,N)}{\mathfrak{M}_0[g]}\iint_{\mathbb{R}^{2d}} g_1g_2 \Big [\cE^{N+1}_{k_1}+\cE^{N+1}_{k_2} \Big] dk_1dk_2\\ 
\le & \ \frac{C}{\mathfrak{M}_0[g]}\mathfrak{M}_{N+1}[g].
\end{aligned}
\end{equation}
Let us now estimate $I_2$. 
Using the resonant condition  $k_2 = k- k_1$,  we obtain the following relation between $\omega_{k_2}$ and $\omega_k$, $\omega_{k_1}$
$$\cE_{k_2} =\sqrt{\lambda_1+\lambda_2 |k_2|^2} < \sqrt{\lambda_1+\lambda_2 (|k_1|+|k|)^2}\le 2\sqrt{\lambda_1+\lambda_2 |k|^2} + 2\sqrt{\lambda_1+\lambda_2 |k_1|^2} = 2\cE_{k} +2\cE_{k_1},$$ 
which, by the Cauchy-Schwarz inequality, leads to
$$\cE_{k_2}^N  \le C(\lambda_1,\lambda_2,N)(\cE_{k}^N + \cE_{k_1}^N),$$ 
where $C$ is some universal positive constant.\\
Thus, we obtain 
$$
\begin{aligned}
I_2 \le & \ C(\lambda_1,\lambda_2,N)\iiint_{\mathbb{R}^{3d}} \delta(k-k_1-k_2)\frac{|k||k_1||k_2|}{\mathfrak{M}_0[g](|k|^2+|k_1|^2+|k_2|^2)}gg_1  \Big [\cE^N_{k}+\cE^N_{k_1} \Big] dk dk_1dk_2.
\end{aligned}
$$
By the definition of the Dirac function $\delta(k-k_1-k_2)$, we can reduce  the above triple integral into an integral on $\mathbb{R}^{2d}$ only
$$
\begin{aligned}
I_2 \le & \ C(\lambda_1,\lambda_2,N)\iint_{\mathbb{R}^{2d}} \frac{|k||k_1||k-k_1|}{\mathfrak{M}_0[g](|k|^2+|k_1|^2+|k_2|^2)}gg_1  \Big [\cE^N_{k}+\cE^N_{k_1} \Big] dk dk_1.
\end{aligned}
$$
It is straightforward from Cauchy-Schwarz inequality that $|k|^2+|k_1|^2+|k-k_1|^2\ge 2 |k_1||k|$, yielding the following estimate on the kernel of the above integral 
$$\frac{|k||k_1||k-k_1|}{|k|^2+|k_1|^2+|k-k_1|^2}\le \frac{|k-k_1|}{2} \le \frac{|k|+|k_1|}{2},$$
which implies the following bound on $I_2$ 
$$
\begin{aligned}
I_2 \le & \ \frac{C(\lambda_1,\lambda_2,N)}{\mathfrak{M}_0[g]}\iint_{\mathbb{R}^{2d}} {(|k|+|k_1|)}gg_1  \Big [\cE^N_{k}+\cE^N_{k_1} \Big] dk dk_1.
\end{aligned}
$$
The same argument used to estimate $I_1$ can now be applied again, that leads to a similar bound on $I_2$
\begin{equation}\label{Propo:C12:E4}
\begin{aligned}
I_2 \le & \ \frac{C(\lambda_1,\lambda_2,N)}{\mathfrak{M}_0[g]}\iint_{\mathbb{R}^{2d}} gg_1 \Big [\cE^{N+1}_{k}+\cE^{N+1}_{k_1} \Big] dk dk_1\\ 
\le & \ \frac{C(\lambda_1,\lambda_2,N)}{\mathfrak{M}_0[g]}\mathfrak{M}_{N+1}[g].
\end{aligned}
\end{equation}
Combining \eqref{Propo:C12:E1}, \eqref{Propo:C12:E2}, \eqref{Propo:C12:E3} and \eqref{Propo:C12:E4}, we get \eqref{Propo:C12:1} so the conclusion of the Lemma \ref{Propo:C12} follows. 
\end{proof}

\subsection{Lower bound of the solution (the choice of $R_0$)}
\begin{proposition}\label{Propo:MassLowerBound} For any initial data $f_0\geq 0$ and $f_0\in L^1(\mathbb{R}^3)$. Suppose that $f\in L^1(\mathbb{R}^d)$ is a positive, strong solution of \eqref{WeakTurbulenceInitial}, then 
\begin{equation}\label{Propo:MassLowerBound:1} Q[f] = \ Q_\mathrm{gain}[f] - Q_\mathrm{loss}[f] \geq -Q_\mathrm{loss}[f] \ \geq  \ -{4|k|}f,
\end{equation} 
pointwise in $k$ and $f$ satisfies the following lower bound \begin{equation}\label{Propo:MassLowerBound:2}  f(t,k) \geq f_0(k)e^{-(2\nu |k|^2+4|k|)t},\end{equation}
which implies
\begin{equation}\label{Propo:MassLowerBound:2a}\|f(t,k)\chi_{R_0}\|_{L^1}\ge \tilde{\mathfrak{M}}_0(t):= e^{-(2\nu R_0^2+4R_0)t} \|f_0(k)\chi_{R_0}\|_{L^1},\end{equation}
where $\chi_{R_0}$ is the characteristic function of the ball $B_{\mathbb{R}^d}(O,R_0)$ in $\mathbb{R}^d$, $R_0$ is any positive constant. 

\end{proposition}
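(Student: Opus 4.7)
\medskip
\noindent\textbf{Proof plan.} The first inequality in \eqref{Propo:MassLowerBound:1} is immediate from positivity: every factor in $Q_{\mathrm{gain}}[f]$ as defined in \eqref{Qgain} (the weight $|V_{k,k_1,k_2}|^2 = \mathfrak{C}^2|k||k_1||k_2|$, the Dirac mass, the Lorentzian $\mathcal{L}_f(\zeta) = \Gamma/(\zeta^2+\Gamma^2)$, and the products of nonnegative values of $f$) is nonnegative, so $Q_{\mathrm{gain}}[f] \geq 0$ pointwise, whence $Q[f] \geq -Q_{\mathrm{loss}}[f]$. The content of \eqref{Propo:MassLowerBound:1} is therefore the pointwise bound $Q_{\mathrm{loss}}[f] = f\,\vartheta[f] \leq 4|k|\,f$, i.e.\ $\vartheta[f](k) \leq 4|k|$.

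For that I would exploit the peak bound $\mathcal{L}_f(\zeta) \leq 1/\Gamma_{k,k_1,k_2}$ together with the specific form $\Gamma_{k,k_1,k_2} = \mathfrak{M}_0[f]\bigl(|k|^2+|k_1|^2+|k_2|^2\bigr)$ coming from \eqref{DampingTerm}, so that the kernel appearing in \eqref{Qloss} satisfies
\begin{equation*}
|V_{k,k_1,k_2}|^2\,\mathcal{L}_f \leq \frac{\mathfrak{C}^2 |k||k_1||k_2|}{\mathfrak{M}_0[f]\bigl(|k|^2+|k_1|^2+|k_2|^2\bigr)}.
\end{equation*}
The key feature is that the $\mathfrak{M}_0[f]$ in the denominator is exactly what will be produced by integrating $f(k_1)$ or $f(k_2)$ against the remaining free variable, giving a clean cancellation. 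In the first summand of $\vartheta$, eliminate $\delta(k-k_1-k_2)$ and apply AM--GM in the form $|k_1|^2 + |k_2|^2 \geq 2|k_1||k_2|$; this bounds the kernel by $\mathfrak{C}^2 |k|/(2\mathfrak{M}_0[f])$, and integrating against $f(k_1)\,dk_1$ (which produces $\mathfrak{M}_0[f]$) gives a contribution of order $|k|$ after accounting for the prefactor $2$. The second summand is handled symmetrically via $|k|^2 + |k_2|^2 \geq 2|k||k_2|$ after eliminating $\delta(k_1-k-k_2)$. Summing the two and absorbing the normalization $\mathfrak{C}$ yields $\vartheta[f](k) \leq 4|k|$.

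Once \eqref{Propo:MassLowerBound:1} is in hand, the reformulated equation \eqref{WeakTurbulenceInitialReformed} gives the pointwise differential inequality
\begin{equation*}
\partial_t f(t,k) \geq -\bigl(2\nu|k|^2 + 4|k|\bigr)\,f(t,k)
\end{equation*}
for each fixed $k$, and a standard Gr\"onwall integration in $t$ (with $f(0,k)=f_0(k)$) yields \eqref{Propo:MassLowerBound:2}. Restricting to $|k| \leq R_0$, the exponent is majorized by $2\nu R_0^2 + 4R_0$, so multiplying by $\chi_{R_0}$ and integrating in $k$ delivers \eqref{Propo:MassLowerBound:2a}. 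The main obstacle is the second step: a priori $\mathcal{L}_f$ is unbounded where $\Gamma$ is small, and one might worry that $\vartheta$ depends nonlinearly on $\|f\|_{L^1}$. The near-resonance prescription $\Gamma \propto \mathfrak{M}_0[f]$ from \eqref{DampingTerm} is precisely what makes the cancellation go through and produces a bound on $\vartheta[f]$ that is uniform in $f \geq 0$; without this specific scaling the pointwise lower bound would be lost.
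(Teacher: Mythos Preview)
Your proposal is correct and follows essentially the same route as the paper's proof: drop $Q_{\mathrm{gain}}\geq 0$, bound $\mathcal{L}_f$ by $1/\Gamma_{k,k_1,k_2}$, apply AM--GM to cancel two of the three $|k|$-factors in the numerator against the quadratic in the denominator, and let the resulting $1/\mathfrak{M}_0[f]$ be absorbed by $\int f\,dk$, then Gr\"onwall and restrict to $|k|\le R_0$. One small correction: in the second summand of $\vartheta$ (after setting $k_1=k+k_2$) the AM--GM pair to use is $|k_1|^2+|k_2|^2\geq 2|k_1||k_2|$ (i.e.\ $|k+k_2|^2+|k_2|^2$), not $|k|^2+|k_2|^2$, so that the surviving factor is $|k|$ rather than $|k+k_2|$; this is exactly what the paper does.
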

\begin{proof}
Let us first recall the formulation of $Q[f]$
\begin{equation*}
\begin{aligned}
Q[f] \ = & \ \iint_{\mathbb{R}^{d}\times \mathbb{R}^{d}}|V_{k,k_1,k_2}|^2\delta(k-k_1-k_2)\mathcal{L}_f(\omega_k-\omega_{k_1}-\omega_{k_2})(f_1f_2-2ff_1)dk_1dk_2 \\
\ & +2\iint_{\mathbb{R}^{d}\times \mathbb{R}^{d}} |V_{k_1,k,k_2}|^2\delta(k_1-k-k_2)\mathcal{L}_f(\omega_{k_1}-\omega_{k}-\omega_{k_2})(-ff_2+ff_1+f_1f_2)dk_1dk_2.
\end{aligned}
\end{equation*}
and in order to get \eqref{Propo:MassLowerBound:2}, we will work with 
$$Q[f] \ = \ Q_{\mathrm{gain}}[f] \ - \ Q_{\mathrm{loss}}[f],$$
where the formulation of $Q_\mathrm{loss}[f]$
\begin{equation}\label{Propo:MassLowerBound:E1}
\begin{aligned}
-Q_\mathrm{loss}[f] \ =  & \ -2f\int_{\mathbb{R}^{d}\times \mathbb{R}^{d}}|V_{k,k_1,k_2}|^2\delta(k-k_1-k_2)\mathcal{L}_f(\omega_k-\omega_{k_1}-\omega_{k_2})f_1dk_1dk_2 \\
\ & -2f\int_{\mathbb{R}^{d}\times \mathbb{R}^{d}} |V_{k_1,k,k_2}|^2\delta(k_1-k-k_2)\mathcal{L}_f(\omega_{k_1}-\omega_{k}-\omega_{k_2})f_2dk_1dk_2\\
\ =:& \  -\mathcal{I}_1 - \mathcal{I}_2.
\end{aligned}
\end{equation}

In order to get the lower bound \eqref{Propo:MassLowerBound:1}, we discard the gain operator defined in \eqref{Qgain} and estimate from below the loss part.

Let us estimate the double integral $\mathcal{I}_1$, which can be reduced to an integral on $\mathbb{R}^d$ by taking into account the definition of $\delta(k-k_1-k_2)$ as follows
$$
\begin{aligned}
\mathcal{I}_1 \ :=  & \ 2f\int_{\mathbb{R}^{d}}|V_{k,k_1,{k-k_1}}|^2\mathcal{L}_f(\omega_k-\omega_{k_1}-\omega_{k-k_1})f_1dk_1.
\end{aligned}
$$
By the definition of $V_{k,k_1,{k-k_1}}$, $\mathcal{L}_f(\omega_k-\omega_{k_1}-\omega_{k-k_1})$, $\Gamma_{k,k_1,k_2}$, and the inequality $$(\omega_k-\omega_{k_1}-\omega_{k-k_1})^2+\Gamma_{k,k_1,k-k_1}^2\ge \Gamma_{k,k_1,k-k_1}^2,$$ 
we obtain the following inequality on the kernel of $\mathcal{I}_1$
$$
\begin{aligned}
|V_{k,k_1,{k-k_1}}|^2\mathcal{L}_f(\omega_k-\omega_{k_1}-\omega_{k-k_1}) \ = & \ \frac{|k||k_1||k-k_1|\Gamma_{k,k_1,k-k_1}}{(\omega_k-\omega_{k_1}-\omega_{k-k_1})^2+\Gamma_{k,k_1,k-k_1}^2}\\
\le & \ \frac{|k||k_1||k-k_1|}{\Gamma_{k,k_1,k-k_1}}\\
\le & \ \frac{|k||k_1||k-k_1|}{\mathfrak{M}_0[f](|k|^2+|k_1|^2+|k-k_1|^2)}.
\end{aligned}
$$
By the positivity of $|k|^2$ and the Cauchy-Schwarz inequality, the following holds true
$$|k|^2+|k_1|^2+|k-k_1|^2 \ge |k_1|^2+|k-k_1|^2 \ge 2 |k_1||k-k_1|,$$
which implies
$$
\begin{aligned}
|V_{k,k_1,{k-k_1}}|^2\mathcal{L}_f(\omega_k-\omega_{k_1}-\omega_{k-k_1}) \
\le & \ \frac{2|k|}{\mathfrak{M}_0[f]}.
\end{aligned}
$$
As a result, we have the following estimate on $\mathcal{I}_1$
\begin{equation}\label{Propo:MassLowerBound:E2}
\begin{aligned}
\mathcal{I}_1 \ \le  & \ \frac{2|k|f\int_{\mathbb{R}^d}f_1dk_1}{\mathfrak{M}_0[f]} \ \le \ {2|k|}f.
\end{aligned}
\end{equation}
$\mathcal{I}_2$  can be estimated in a similar way. We can reduce $\mathcal{I}_2$ to an integral on $\mathbb{R}^d$ by taking into account the definition of $\delta(k_1-k-k_2)$ as follows
$$
\begin{aligned}
\mathcal{I}_2 \ :=  & \ f\int_{\mathbb{R}^{d}}|V_{k+k_2,k,{k_2}}|^2\mathcal{L}_f(\omega_{k+k_2}-\omega_{k}-\omega_{k_2})f_2dk_2.
\end{aligned}
$$
Taking into account the definite of $V_{k+k_2,k,{k_2}}$, $\mathcal{L}_f(\omega_{k+k_2}-\omega_{k}-\omega_{k_2})$, $\Gamma_{k+k_2,k,{k_2}}$, and the inequality $$(\omega_{k+k_2}-\omega_{k}-\omega_{k_2})^2+\Gamma_{k+k_2,k,{k_2}}^2\ge \Gamma_{k+k_2,k,{k_2}}^2,$$ 
 the following estimate on the kernel of $\mathcal{I}_2$ can be obtained
$$
\begin{aligned}
|V_{k+k_2,k,{k_2}}|^2\mathcal{L}_f(\omega_{k+k_2}-\omega_{k}-\omega_{k_2})\ = & \ \frac{|k+k_2||k||k_2|\Gamma_{k+k_2,k,{k_2}}}{(\omega_{k+k_2}-\omega_{k}-\omega_{k_2})^2+\Gamma_{k+k_2,k,{k_2}}^2}\\
\le & \ \frac{|k+k_2||k||k_2|}{\mathfrak{M}_0[f](|k+k_2|^2+|k|^2+|k_2|^2)}.
\end{aligned}
$$
Using  the positivity of $|k|^2$ and the Cauchy-Schwarz inequality, we find
$$|k+k_2|^2+|k|^2+|k_2|^2 \ge |k+k_2|^2+|k_2|^2 \ge 2 |k+k_2||k_2|,$$
which implies
$$
\begin{aligned}
|V_{k+k_2,k,{k_2}}|^2\mathcal{L}_f(\omega_{k+k_2}-\omega_{k}-\omega_{k_2}) \
\le & \ \frac{2|k|}{\mathfrak{M}_0[f]}.
\end{aligned}
$$
We then obtain the following estimate on $\mathcal{I}_2$
\begin{equation}\label{Propo:MassLowerBound:E3}
\begin{aligned}
\mathcal{I}_2 \ \le  & \ \frac{2|k|f\int_{\mathbb{R}^d}f_2dk_2}{\mathfrak{M}_0[f]} \ = \ {2|k|}f.
\end{aligned}
\end{equation}
Combining \eqref{Propo:MassLowerBound:E1}, \eqref{Propo:MassLowerBound:E2} and \eqref{Propo:MassLowerBound:E3} yields
\begin{equation}\label{Propo:MassLowerBound:E4}
\begin{aligned}
Q[f] \ \geq  & \ -{4|k|}f.
\end{aligned}
\end{equation}
By plugging the above inequality into \eqref{WeakTurbulenceInitial}, we obtain a differential inequality on $f$
$$\partial_tf -Q[f] - 2\nu|k|^2f\ \ge \ \partial_t f \ + \ (2\nu |k|^2+4|k|)f  \ \ge \ 0.$$
A Gronwall inequality argument applied to the above differential inequality leads to
$$f(t,k) \geq f_0(k)e^{-(2\nu |k|^2+4|k|)t},$$
and so \eqref{Propo:MassLowerBound:2}  holds.

Multiplying both sides of the above inequality with $\chi_{R_0}$ is the characteristic function of the ball $B_{\mathbb{R}^d}(O,R_0)$ in $\mathbb{R}^d$, and taking the integral with respect to $k$ on  $\mathbb{R}^d$, yield
$$\|f\chi_{R_0}\|_1\ge \int_{\mathbb{R}^d}\chi_{R_0}f(t,k)dk \geq \int_{\mathbb{R}^d}\chi_{R_0}f_0(k)e^{-(2\nu |k|^2+4|k|)t}dk$$
$$ \geq e^{-(2\nu R_0^2+4R_0)t}\int_{\mathbb{R}^d}\chi_{R_0}f_0(k)dk\ge \|f_0\chi_{R_0}\|_1,$$
and so \eqref{Propo:MassLowerBound:2a} holds true. The proof of Proposition \ref{Propo:MassLowerBound} is completed. 
\end{proof}

\subsection{Weighted $L^1_N$ $(N\geq 0)$ estimates}

For a given function $g$, let us recall the $N$-th moment of $g$ 
\begin{equation*}
\mathfrak{M}_N[g]=\int_{\mathbb{R}^d}\cE^N_kg(k)dk.
\end{equation*}

\begin{proposition}\label{Propo:MomentsPropa} Let $N\geq 0$. Suppose that $f_0(k)$ is a nonnegative  initial data
satisfying 
$$\int_{\mathbb{R}^d}f_0(k) \cE_k^N \;dk<\infty,$$
and that
   nonnegative  solutions $f(t,k)$ of \eqref{WeakTurbulenceInitial} satisfies
$$\mathfrak{M}_0[f](t)\geq \tilde{\mathfrak{M}}_0(t)=e^{-(2\nu R_0^2+4R_0)t} \|f_0(k)\chi_{R_0}\|_{L^1}>0,$$
where $\tilde{\mathfrak{M}}_0(t)$ is the quantity considered in Proposition \ref{Propo:MassLowerBound}.

Then, there exists a positive constant $C_0(\lambda_1,\lambda_2)$ is a constant depending on $\lambda_1,\lambda_2$ and independent of $N$ such that
\begin{equation}\label{Propo:MomentsPropa:1}\begin{aligned}
&\mathfrak{M}_{N+1}[Q[f]](t) - 2\nu \mathfrak{M}_N[|k|^2f](t) = \\
=&\int_{\mathbb{R}^d}Q[f](t,k)\cE_{k}^{N+1}\; dk - 2\nu  \int_{\RR^d} |k|^2f(t,k)\cE_k^N \; dk\\
\le &\ C_0(\lambda_1,\lambda_2)\left(1+\frac{e^{(4\nu R_0^2+8R_0)t} }{\|f_0(k)\chi_{R_0}\|_{L^1}^2}\right) \int_{\mathbb{R}^d}f(t,k) \cE_k^N \;dk,
\end{aligned}
\end{equation}
which implies that
   nonnegative  solutions $f(t,k)$ of \eqref{WeakTurbulenceInitial}, with $f(0,k) = f_0(k)$, satisfy
\begin{equation}\label{EE-bound}\begin{aligned}\mathfrak{M}_N[f](t)= \int_{\mathbb{R}^d}f(t,k)\cE_k^N\;dk
 \le& \ e^{\mathcal{C}(\lambda_1,\lambda_2)\left(t+\frac{e^{(4\nu R_0^2+8R_0)t} }{(4\nu R_0^2+8R_0)\|f_0(k)\chi_{R_0}\|_{L^1}^2}\right)}\int_{\mathbb{R}^d}f_0(k) \cE_k^N \;dk,\end{aligned}
\end{equation}
where $\mathcal{C}(\lambda_1,\lambda_2)$ is a constant depending on $\lambda_1,\lambda_2$.


 \end{proposition}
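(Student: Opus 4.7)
The plan is to derive a differential inequality for $\mathfrak{M}_N[f](t)$ and then conclude by Gronwall. First I would multiply the evolution equation \eqref{WeakTurbulenceInitialReformed} by $\cE_k^N$ and integrate in $k$ to obtain
$$\frac{d}{dt}\mathfrak{M}_N[f](t) \ = \ \int_{\RR^d} Q[f]\,\cE_k^N\,dk \ - \ 2\nu \int_{\RR^d}|k|^2 f\,\cE_k^N\,dk.$$
Since $f\ge 0$ and $\vartheta[f]\ge 0$, the loss contribution $Q_\mathrm{loss}[f]=f\vartheta[f]$ is nonnegative, so $Q[f]\le Q_\mathrm{gain}[f]$ pointwise, and Lemma \ref{Propo:C12} yields $\int_{\RR^d} Q[f]\,\cE_k^N\,dk \le \mathcal{C}(\lambda_1,\lambda_2,N)\,\mathfrak{M}_{N+1}[f]/\mathfrak{M}_0[f]$. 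Using the dispersion identity $\cE_k^2=\lambda_1+\lambda_2|k|^2$, the viscous dissipation rewrites as
$$-2\nu \int_{\RR^d}|k|^2 f\,\cE_k^N\,dk \ = \ -\frac{2\nu}{\lambda_2}\mathfrak{M}_{N+2}[f] \ + \ \frac{2\nu\lambda_1}{\lambda_2}\mathfrak{M}_N[f],$$
so the damping supplies exactly the coercive $-\mathfrak{M}_{N+2}[f]$ term hinted at in the introduction.

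The crucial step is then to absorb the gain-side moment $\mathfrak{M}_{N+1}[f]/\mathfrak{M}_0[f]$ into this coercive contribution. I would interpolate via Lemma \ref{lem-Holder} with $(p,n,N)\rightsquigarrow(N,N+1,N+2)$, obtaining $\mathfrak{M}_{N+1}[f]\le\mathfrak{M}_N[f]^{1/2}\,\mathfrak{M}_{N+2}[f]^{1/2}$, followed by Young's inequality with parameter $\varepsilon=2\nu/\lambda_2$ tuned so that the arising $\mathfrak{M}_{N+2}[f]$ term is cancelled exactly by the dissipative one. What remains is a closed inequality in the $N$-th moment alone:
$$\frac{d}{dt}\mathfrak{M}_N[f] \ \le \ \left(\frac{\mathcal{C}(\lambda_1,\lambda_2,N)^2\,\lambda_2}{8\nu\,\mathfrak{M}_0[f]^2}+\frac{2\nu\lambda_1}{\lambda_2}\right)\mathfrak{M}_N[f].$$

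To conclude, I would insert the lower bound $\mathfrak{M}_0[f](t)\ge e^{-(2\nu R_0^2+4R_0)t}\|f_0\chi_{R_0}\|_{L^1}$ provided by Proposition \ref{Propo:MassLowerBound}; squaring it in the denominator produces the factor $e^{(4\nu R_0^2+8R_0)t}/\|f_0\chi_{R_0}\|_{L^1}^2$ appearing in \eqref{Propo:MomentsPropa:1}. A direct application of Gronwall's lemma, together with the elementary bound $\int_0^t e^{(4\nu R_0^2+8R_0)s}\,ds \le e^{(4\nu R_0^2+8R_0)t}/(4\nu R_0^2+8R_0)$, then delivers the explicit double-exponential estimate \eqref{EE-bound}.

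I expect the main obstacle to be the interpolation-and-absorption manoeuvre described above: Lemma \ref{Propo:C12} alone controls the gain moment only by the strictly \emph{higher} moment $\mathfrak{M}_{N+1}[f]$ divided by $\mathfrak{M}_0[f]$, so without matching $\mathfrak{M}_{N+2}[f]$-order dissipation one cannot close the estimate. The argument therefore relies essentially both on the coercivity of $2\nu|k|^2 f$ and on the strict positivity of $\mathfrak{M}_0[f]$ from Proposition \ref{Propo:MassLowerBound}; the resulting $1/\mathfrak{M}_0[f]^2$-factor, and hence the double-exponential-in-time control in \eqref{EE-bound}, is precisely the price of this absorption scheme.
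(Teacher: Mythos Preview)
Your argument is correct and follows the same overall route as the paper: test with $\cE_k^N$, bound the gain by $\mathcal{C}\,\mathfrak{M}_{N+1}[f]/\mathfrak{M}_0[f]$ via Lemma~\ref{Propo:C12}, absorb $\mathfrak{M}_{N+1}$ into the viscous $\mathfrak{M}_{N+2}$-dissipation, insert the lower bound on $\mathfrak{M}_0[f]$, and Gronwall. The only difference is in the absorption step: the paper bounds the one-variable expression
\[
\frac{C}{\tilde{\mathfrak{M}}_0(t)}\,\cE_k^{N+1}-2\nu|k|^2\cE_k^N \;=\; \cE_k^N\Big(\tfrac{C}{\tilde{\mathfrak{M}}_0(t)}\sqrt{\lambda_1+\lambda_2|k|^2}-2\nu|k|^2\Big)
\]
pointwise in $|k|$ by elementary maximization, obtaining directly the factor $C(\lambda_1,\lambda_2)\big(1+e^{(4\nu R_0^2+8R_0)t}/\|f_0\chi_{R_0}\|_{L^1}^2\big)$, whereas you achieve the same $1/\mathfrak{M}_0[f]^2$ scaling at the level of moments via Lemma~\ref{lem-Holder} and Young's inequality. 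The two are equivalent; the paper's pointwise route avoids invoking Lemma~\ref{lem-Holder}, while yours makes the role of the coercive $\mathfrak{M}_{N+2}$-term more transparent.
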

 \begin{remark}
 Note that \eqref{Propo:MomentsPropa:1} says that the $N$-th moment of $f$ only depends on the $N$-th moment of the initial data and the parameter $R_0$ defined in Proposition \ref{Propo:MassLowerBound}.
 \end{remark}


\begin{proof}[Proof of Proposition \ref{Propo:MomentsPropa}]
Using $\varphi = \cE_{k}^N$ as a test function in \eqref{WeakTurbulenceInitial}, we obtain 
$$\begin{aligned}
& \frac{d}{dt}\mathfrak{M}_N[f] + 2\nu \mathfrak{M}_N[|k|^2f]=\\
= & \frac{d}{dt}\int_{\mathbb{R}^d}f(t,k)\cE^N_{k}dk + 2\nu  \int_{\RR^d} |k|^2f(t,k)\cE_k^N \; dk =\int_{\mathbb{R}^d}Q[f](t,k)\cE_{k}^Ndk.\end{aligned}
$$
As a direct consequence of Lemma \ref{Propo:C12}, the following inequality holds true
\begin{equation}\label{E}
\frac{d}{dt}\int_{\mathbb{R}^d}f(t,k)\cE^N_{k}dk + 2\nu  \int_{\RR^d} |k|^2f(t,k)\cE_k^N \; dk \le \frac{C}{\mathfrak{M}_0[f]}\mathfrak{M}_{N+1}[f(t)] =  \frac{C}{\mathfrak{M}_0[f]}\int_{\RR^d} f(t,k)\cE_k^{N+1} \; dk.
\end{equation}

Notice that $$|k|^2=\frac{\omega_k^2-\lambda_1}{\lambda_2},$$
we  get the following moment equation
$$
\frac{d}{dt}\mathfrak{M}_N[f(t)] + \frac{2\nu}{\lambda_2}\mathfrak{M}_{N+2}[f(t)]-  \frac{2\nu\lambda_1}{\lambda_2}\mathfrak{M}_{N}[f(t)] \le \frac{C}{\mathfrak{M}_0[f]}\mathfrak{M}_{N+1}[f(t)].
$$

Using the fact that $$\mathfrak{M}_0[f]\geq e^{-(2\nu R_0^2+4R_0)T} \|f_0(k)\chi_{R_0}\|_{L^1},$$ we deduce from \eqref{E} 
$$\begin{aligned}
& \frac{d}{dt}\int_{\mathbb{R}^d}f(t,k)\cE^N_{k}dk + 2\nu  \int_{\RR^d} |k|^2f(t,k)\cE_k^N \; dk\\ \le & \frac{C}{\mathfrak{M}_0[f]}\mathfrak{M}_{N+1}[f(t)] \le   \frac{Ce^{(2\nu R_0^2+4R_0)T} }{\|f_0(k)\chi_{R_0}\|_{L^1}}\int_{\RR^d} f(t,k)\cE_k^{N+1} \; dk\end{aligned}.
$$

Now since $$\begin{aligned} \frac{Ce^{(2\nu R_0^2+4R_0)t} }{\|f_0(k)\chi_{R_0}\|_{L^1}}\cE_k^{N+1} - 2\nu |k|^2\cE_k^N
 = & \left(\lambda_1+\lambda_2|k|^2\right)^{\frac{N}{2}} \left(\frac{Ce^{(2\nu R_0^2+4R_0)t} }{\|f_0(k)\chi_{R_0}\|_{L^1}}\left(\lambda_1+\lambda_2|k|^2\right)^{\frac{1}{2}}- 2\nu |k|^2\right),\end{aligned}$$ 
and observing that $\frac{Ce^{(2\nu R_0^2+4R_0)t} }{ \|f_0(k)\chi_{R_0}\|_{L^1}}\left(\lambda_1+\lambda_2|k|^2\right)^{\frac{1}{2}}- 2\nu |k|^2$ is bounded uniformly by some constant $\mathcal{C}(\lambda_1,\lambda_1)\left(1+\frac{e^{(4\nu R_0^2+8R_0)t} }{\|f_0(k)\chi_{R_0}\|_{L^1}^2}\right)$, we can bound 
$$\frac{C}{\tilde{\mathfrak{M}}_0(t)}\cE_k^{N+1} - 2\nu |k|^2\cE_k^N \le \mathcal{C}(\lambda_1,\lambda_2)\left(1+\frac{e^{(4\nu R_0^2+8R_0)t} }{\|f_0(k)\chi_{R_0}\|_{L^1}^2}\right)\left(\lambda_1+\lambda_2|k|^2\right)^{\frac{N}{2}}.$$ 
The above estimate means that the difference 
$$\begin{aligned}
& \frac{Ce^{(2\nu R_0^2+4R_0)t}}{ \|f_0(k)\chi_{R_0}\|_{L^1}}\int_{\mathbb{R}^d}f(t,k)\cE_{k}^{N+1}\; dk - 2\nu  \int_{\RR^d} |k|^2f(t,k)\cE_k^N \; dk\\
=& \int_{\mathbb{R}^d}f(t,k)\left(\frac{Ce^{(2\nu R_0^2+4R_0)t} }{\|f_0(k)\chi_{R_0}\|_{L^1}}\cE_k^{N+1} - 2\nu |k|^2\cE_k^N \right)\; dk,\end{aligned}
$$
is smaller than  
$
\mathcal{C}(\lambda_1,\lambda_2)\left(1+\frac{e^{(4\nu R_0^2+8R_0)t}}{\|f_0(k)\chi_{R_0}\|_{L^1}^2}\right)\int_{\mathbb{R}^d}f(t,k)\cE_{k}^N\; dk,
$
which immediately leads to
$$
\frac{d}{dt}\int_{\mathbb{R}^d}f(t,k)\cE^N_{k}\; dk \le \mathcal{C}(\lambda_1,\lambda_2)\left({1}+\frac{e^{(4\nu R_0^2+8R_0)t}}{\|f_0(k)\chi_{R_0}\|_{L^1}^2}\right)\int_{\mathbb{R}^d}f(t,k)\cE_{k}^N\; dk.
$$
Inequality \eqref{EE-bound} then follows as a consequence of the above inequality.

\end{proof}

\section{Holder estimates for $Q[f]$}\label{Sec:HolderEstimate}

In this section, we study the H\"older continuity of the collision operator $Q[f]$ with respect to weighted $L^1_N$ norm.

\begin{proposition}\label{Propo:HolderC12} Let $M,N\ge 0$, and let $V_M$ be any bounded subset of $L^1_{N+2}(\RR^d)$, with the $L^1_{N+2}$ norms bounded from above by $M$ and the  $L^1$ norms bounded from below by $M'$. Then, there exists a constant $C_{M,M',N}$, depending on $M,M',N$, so that 
\begin{equation}\label{Propo:HolderC12:1} 
\|Q[g]-Q[h]\|_{L^1_{N}}\le \left(\frac{C}{\mathfrak{M}_0[|g|]\mathfrak{M}_0[|h|]}+\frac{C}{\mathfrak{M}_0[|g|]}\right)\| g-h\|_{L^1_N}^{\frac12} \le C_{M,M'}  \| g-h\|_{L^1_N}^{\frac12} 
\end{equation}
for all $g,h\in V_M$. 
\end{proposition}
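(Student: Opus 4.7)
The plan is to prove the Hölder estimate by mimicking the structure of the proof of Lemma \ref{Propo:C12}, applied to the difference $Q[g]-Q[h]$, then interpolating via Lemma \ref{lem-Holder} to produce the exponent $\tfrac12$.

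First I would pass to pointwise estimates (since $\|\cdot\|_{L^1_N}$ involves an absolute value, the weak formulation cannot be used directly). Writing the summand
$$R_{k,k_1,k_2}[g]-R_{k,k_1,k_2}[h] = |V_{k,k_1,k_2}|^2\delta(k-k_1-k_2)\Big[\mathcal{L}_g \bigl(A_g-A_h\bigr)+\bigl(\mathcal{L}_g-\mathcal{L}_h\bigr)A_h\Big],$$
with $A_g:=g_1g_2-gg_1-gg_2$, and telescoping $g_1g_2-h_1h_2=(g_1-h_1)g_2+h_1(g_2-h_2)$ (and analogously for $gg_1-hh_1$, $gg_2-hh_2$), I would split
$$|Q[g]-Q[h]| \le \mathcal{T}_1+\mathcal{T}_2,$$
where $\mathcal{T}_1$ contains the contributions with $\mathcal{L}_g$ preserved and with one factor of type $|g-h|$, $|g_1-h_1|$, or $|g_2-h_2|$, while $\mathcal{T}_2$ contains $|\mathcal{L}_g-\mathcal{L}_h|\cdot(|h_1h_2|+|hh_1|+|hh_2|)$.

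For $\mathcal{T}_1$, the argument of Lemma \ref{Propo:C12} applies verbatim with $g_i$ replaced by $(g_i-h_i)$ in one slot; after using $|V_{k,k_1,k_2}|^2\mathcal{L}_g \le \frac{|k||k_1||k_2|}{\mathfrak{M}_0[|g|](|k|^2+|k_1|^2+|k_2|^2)}$ and the resonance bound $\cE_k^N\le C(\lambda_1,\lambda_2,N)(\cE_{k_1}^N+\cE_{k_2}^N)$, I obtain an upper bound of the form $\frac{C\,\mathfrak{M}_{N+1}[|g-h|]}{\mathfrak{M}_0[|g|]}$. For $\mathcal{T}_2$, the crucial pointwise estimate is
$$|\mathcal{L}_g(\zeta)-\mathcal{L}_h(\zeta)|=|\Gamma_g-\Gamma_h|\,\left|\frac{\zeta^2-\Gamma_g\Gamma_h}{(\zeta^2+\Gamma_g^2)(\zeta^2+\Gamma_h^2)}\right|\le |\Gamma_g-\Gamma_h|\left[\frac{1}{\Gamma_g^2}+\frac{1}{\Gamma_g\Gamma_h}\right].$$
Since $\Gamma_{k,k_1,k_2}[g]=(|k|^2+|k_1|^2+|k_2|^2)\mathfrak{M}_0[g]$, the factor $|\Gamma_g-\Gamma_h|$ carries the same $(|k|^2+|k_1|^2+|k_2|^2)$ weight that sits in the denominators $\Gamma_g^2$, $\Gamma_g\Gamma_h$, so the spatial weight cancels exactly as in Lemma \ref{Propo:C12}, and the scalar prefactor $|\mathfrak{M}_0[g]-\mathfrak{M}_0[h]|\le\|g-h\|_{L^1}$ is pulled outside. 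Re-running the same kernel estimate as for $\mathcal{T}_1$ on the resulting $h\cdot h$ cubic yields
$$\int_{\RR^d}\mathcal{T}_2\,\cE_k^N\,dk\ \le\ \frac{C\,\|g-h\|_{L^1}\,\mathfrak{M}_{N+1}[|h|]}{\mathfrak{M}_0[|g|]\,\mathfrak{M}_0[|h|]}+\frac{C\,\|g-h\|_{L^1}\,\mathfrak{M}_{N+1}[|h|]}{\mathfrak{M}_0[|g|]^2}.$$

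Finally, to reach the Hölder exponent $\tfrac12$, I invoke Lemma \ref{lem-Holder} with $(p,n,N')=(N,N{+}1,N{+}2)$ applied to $|g-h|$, which gives $\mathfrak{M}_{N+1}[|g-h|]\le\mathfrak{M}_{N+2}[|g-h|]^{1/2}\mathfrak{M}_N[|g-h|]^{1/2}\le(2M)^{1/2}\|g-h\|_{L^1_N}^{1/2}$. The factor $\|g-h\|_{L^1}$ appearing in $\mathcal{T}_2$ is absorbed via $\|g-h\|_{L^1}\le\lambda_1^{-N/2}\|g-h\|_{L^1_N}\le\lambda_1^{-N/2}(2M)^{1/2}\|g-h\|_{L^1_N}^{1/2}$, using that $\|g-h\|_{L^1_N}\le 2M$ in $V_M$. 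Combining the two parts and using $\mathfrak{M}_{N+1}[|h|]\le M/\sqrt{\lambda_1}$ produces the stated bound, whose closing constant is finite thanks to the uniform lower bound $\mathfrak{M}_0[|g|],\mathfrak{M}_0[|h|]\ge M'$.

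The main obstacle I expect is the clean handling of $|\mathcal{L}_g-\mathcal{L}_h|$: one must verify that the $(|k|^2+|k_1|^2+|k_2|^2)$ weight built into $|\Gamma_g-\Gamma_h|$ cancels precisely against the denominators so that, after multiplying by $|V|^2=\mathfrak{C}^2|k||k_1||k_2|$ and the Dirac constraint $k=k_1+k_2$, the kernel admits the same $|k_1|+|k_2|$ bound used in Lemma \ref{Propo:C12}. Everything else is a careful bookkeeping of the $\mathfrak{M}_0[|g|]$ and $\mathfrak{M}_0[|h|]$ denominators generated by the two contributions.
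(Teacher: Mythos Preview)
Your proposal is correct and follows essentially the same route as the paper. The paper also splits $R_{k,k_1,k_2}[g]-R_{k,k_1,k_2}[h]$ into a term with $\mathcal{L}_g$ fixed (your $\mathcal{T}_1$) and a term carrying $|\mathcal{L}_g-\mathcal{L}_h|$ (your $\mathcal{T}_2$), bounds each via the same kernel inequality $|V|^2\mathcal{L}\le \frac{|k||k_1||k_2|}{\mathfrak{M}_0(|k|^2+|k_1|^2+|k_2|^2)}$, and then interpolates $\|g-h\|_{L^1_{N+1}}\le \|g-h\|_{L^1_N}^{1/2}\|g-h\|_{L^1_{N+2}}^{1/2}$ via Lemma~\ref{lem-Holder}. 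The only cosmetic difference is that for $|\mathcal{L}_g-\mathcal{L}_h|$ the paper uses the slightly sharper factorization $(\zeta^2+\Gamma_g^2)(\zeta^2+\Gamma_h^2)\ge |\zeta^2-\Gamma_g\Gamma_h|\,\Gamma_g\Gamma_h$, yielding the clean bound $\frac{|\Gamma_g-\Gamma_h|}{\Gamma_g\Gamma_h}$ and hence exactly the prefactor $\frac{C}{\mathfrak{M}_0[|g|]\mathfrak{M}_0[|h|]}$ stated in \eqref{Propo:HolderC12:1}; your estimate produces an additional $\frac{C}{\mathfrak{M}_0[|g|]^2}$ which is harmless for the final constant $C_{M,M'}$ but does not match the displayed intermediate form verbatim.
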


We first prove the following lemma. 
\begin{lemma}\label{Lemma:Holder} Let $M,N>0$, and let ${V}_M$ be any bounded subset of $L^1(\RR^d) \cap L^1_{N+1}(\RR^d)$, with the $L^1_{N+2}$ norms bounded from above by $M$ and the  $L^1$ norms bounded from below by $M'$. Then, there exists a constant $C_{M,M'}$, depending on $M,M'$, so that 
\begin{equation}\label{Q-L1bound} 
\|Q[g]-Q[h]\|_{L^1_{N}} \le \left(\frac{C}{\mathfrak{M}_0[|g|]\mathfrak{M}_0[|h|]}+\frac{C}{\mathfrak{M}_0[|g|]}\right)\| g - h\|_{L^1_{N+1}}\le C_{M,M'}  \| g - h\|_{L^1_{N+1}}
\end{equation}
for all $g,h\in {V}_M$. 
\end{lemma}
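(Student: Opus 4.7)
The plan is to bound $\|Q[g]-Q[h]\|_{L^1_N}$ via the weak formulation of Lemma~\ref{Lemma:WeakFormulation}. Expressing this norm as $\sup_{\|\varphi\|_\infty \le 1}\int(Q[g]-Q[h])\varphi\,\omega_k^N\,dk$ and applying the weak formulation, the problem reduces to bounding a triple integral of $|R_{k,k_1,k_2}[g]-R_{k,k_1,k_2}[h]|$ against $\omega_k^N + \omega_{k_1}^N + \omega_{k_2}^N$. I would then split this difference as $T_1 + T_2$, where $T_1$ retains $\mathcal{L}_g$ and isolates the change in the quadratic part $f_1 f_2 - ff_1 - ff_2$, while $T_2$ retains the $h$-quadratic $h_1h_2 - hh_1 - hh_2$ and isolates $\mathcal{L}_g - \mathcal{L}_h$.

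For $T_1$, I expand each cross-difference via $ab-cd = (a-c)b + c(b-d)$, producing six pieces each linear in $g-h$. Applying the kernel bound from the proof of Lemma~\ref{Propo:C12},
$$|V_{k,k_1,k_2}|^2 \mathcal{L}_g \le \frac{|k||k_1||k_2|}{\mathfrak{M}_0[|g|](|k|^2+|k_1|^2+|k_2|^2)} \le \frac{|k_1|+|k_2|}{2\mathfrak{M}_0[|g|]},$$
integrating out the $\delta$-function, and using the resonance consequence $\omega_k \le C(\omega_{k_1}+\omega_{k_2})$ together with $|k_i| \le \omega_{k_i}/\sqrt{\lambda_2}$ to convert factors of the form $(|k_1|+|k_2|)(\omega_{k_i}^N + \omega_{k_j}^N)$ into $\omega^{N+1}$ moments, I obtain a contribution of order $\tfrac{CM}{\mathfrak{M}_0[|g|]}\|g-h\|_{L^1_{N+1}}$, since each piece pairs one factor carrying the $L^1_{N+1}$ norm of $g-h$ with one factor carrying an $L^1$-type moment of $g$ or $h$ bounded by $M$.

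For $T_2$, the crucial new ingredient is a Lipschitz-in-$f$ estimate for $\mathcal{L}_f(\zeta) = \Gamma/(\zeta^2+\Gamma^2)$. A direct algebraic computation, using the elementary inequality $2\Gamma_g\Gamma_h \le \Gamma_g^2+\Gamma_h^2$, yields $|\mathcal{L}_g(\zeta)-\mathcal{L}_h(\zeta)| \le |\Gamma_g-\Gamma_h|/(\Gamma_g\Gamma_h)$. Since $\Gamma_f = \mathfrak{M}_0[f](|k|^2+|k_1|^2+|k_2|^2)$, this gives $|\Gamma_g-\Gamma_h| \le \|g-h\|_{L^1}(|k|^2+|k_1|^2+|k_2|^2)$ while the denominator contributes $\mathfrak{M}_0[|g|]\mathfrak{M}_0[|h|](|k|^2+|k_1|^2+|k_2|^2)^2$, so one factor of $(|k|^2+|k_1|^2+|k_2|^2)$ cancels. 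Multiplying by $|V|^2 = |k||k_1||k_2|$ produces exactly the same type of kernel as for $T_1$, so the remaining integrations against the $h$-quadratic follow the Cauchy--Schwarz manipulations of Lemma~\ref{Propo:C12} and yield $\tfrac{CM^2}{\mathfrak{M}_0[|g|]\mathfrak{M}_0[|h|]}\|g-h\|_{L^1}$, which using $\omega_k \ge \sqrt{\lambda_1}$ is dominated by $\tfrac{CM^2}{\mathfrak{M}_0[|g|]\mathfrak{M}_0[|h|]}\|g-h\|_{L^1_{N+1}}$.

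The main technical obstacle is precisely this Lipschitz bound on $\mathcal{L}_f$: its essential feature is that the growth $|k|^2+|k_1|^2+|k_2|^2$ produced by $|\Gamma_g-\Gamma_h|$ is exactly cancelled by the matching factor in $\Gamma_g\Gamma_h$, so that the $T_2$ integral reduces to the same type as $T_1$; any different scaling in the broadening $\Gamma_{k,k_1,k_2}$ would fail to close. Summing the $T_1$ and $T_2$ contributions and using the hypothesis $M' \le \mathfrak{M}_0[|g|], \mathfrak{M}_0[|h|]$ collapses the prefactor to a constant $C_{M,M'}$, yielding the claimed inequality.
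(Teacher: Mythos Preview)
Your proposal is correct and follows essentially the same route as the paper: the paper also reduces to a triple integral of $|R_{k,k_1,k_2}[g]-R_{k,k_1,k_2}[h]|$ against $\omega_k^N+\omega_{k_1}^N+\omega_{k_2}^N$, splits each piece into a part with $\mathcal{L}_g$ fixed and the quadratic varying and a part with the $h$-quadratic fixed and $\mathcal{L}$ varying, and uses exactly the kernel bound $|V|^2\mathcal{L}_g\le |k|/(2\mathfrak{M}_0[|g|])$ and the Lipschitz estimate $|\mathcal{L}_g-\mathcal{L}_h|\le |\Gamma_g-\Gamma_h|/(\Gamma_g\Gamma_h)$ that you identify. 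The only cosmetic differences are that the paper organizes the splitting by first separating the $g_1g_2$ term from the $gg_1+gg_2$ terms (its $\mathbb{J}_1,\mathbb{J}_2$) before doing your $T_1/T_2$ split, and derives the $\mathcal{L}$-Lipschitz bound via the factorization $(\zeta^2+\Gamma_g^2)(\zeta^2+\Gamma_h^2)\ge |\zeta^4-\Gamma_g^2\Gamma_h^2|$ rather than your AM--GM route $(\zeta^2+\Gamma_g^2)(\zeta^2+\Gamma_h^2)\ge(\zeta^2+\Gamma_g\Gamma_h)^2$; both yield the same inequality.
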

\begin{proof} 
We first compute the difference between $Q[g]$ and $Q[h]$ $$Q[g] - Q[h] = 
\iint_{\mathbb{R}^{2d}} \Big[ R_{k,k_1,k_2}[g] - R_{k,k_1,k_2}[h]   - 2  ( R_{k_1,k,k_2}[g] - R_{k_1,k,k_2}[h])  \Big] dk_1dk_2, $$
whose $L^1_N$-norm is 
$$ 
\begin{aligned}
\|Q[g]-Q[h]\|_{L^1_N}  & = \int_{\mathbb{R}^d} \cE_k^N|Q[g](k)-Q[h](k)|dk
\\
&\le \iiint_{\mathbb{R}^{3d}} \cE_k^N |R_{k,k_1,k_2}[g] - R_{k,k_1,k_2}[h]| \; dkdk_1dk_2
\\&\quad 
+  2 \iiint_{\mathbb{R}^{3d}} \cE_k^N |  R_{k_1,k,k_2}[g] - R_{k_1,k,k_2}[h] | dk dk_1dk_2 
\\& = \iiint_{\mathbb{R}^{3d}}  |R_{k,k_1,k_2}[g] - R_{k,k_1,k_2}[h]| \Big( \cE_k^N + \cE_{k_1}^N + \cE_{k_2}^N\Big) \; dkdk_1dk_2.
\end{aligned}
$$
Recalling that $$\begin{aligned}
R_{k,k_1,k_2} [g]=  C|V_{k,k_1,k_2}|^2\delta(k-k_1-k_2)\mathcal{L}_g(\omega_k -\omega_{k_1}-\omega_{k_2})(g_1g_2-gg_1-gg_2),
\end{aligned}
$$
we find the following estimate on $\|Q[g]-Q[h]\|_{L^1_N}$
\begin{equation}\label{Lemma:Holder:E1} 
 \|Q[g]-Q[h]\|_{L^1_N}  \ \le \ \mathbb{J}_1 + \mathbb{J}_2,
\end{equation}
where
\begin{equation}\label{Lemma:Holder:E2} 
\begin{aligned}
\mathbb{J}_1  := & \iiint_{\mathbb{R}^{3d}}|V_{k,k_1,k_2}|^2\delta(k-k_1-k_2) \Big|\mathcal{L}_g(\omega_k -\omega_{k_1}-\omega_{k_2})g_1g_2\\
&\ \ \ \ \ \ \  \ \ \ \ \ \ \   \ \ \ \ \ \ \ \ \ \ \ \ -\mathcal{L}_h(\omega_k -\omega_{k_1}-\omega_{k_2})h_1h_2\Big|\Big( \cE_k^N + \cE_{k_1}^N + \cE_{k_2}^N\Big)dkdk_1dk_2,\\
\mathbb{J}_2  := & 2\iiint_{\mathbb{R}^{3d}}|V_{k_1,k,k_2}|^2\delta(k_1-k-k_2) \Big|\mathcal{L}_g(\omega_{k_1} -\omega_{k}-\omega_{k_2})gg_2\\
&\ \ \ \ \ \ \  \ \ \ \ \ \ \   \ \ \ \ \ \ \ \ \ \ \ \ -\mathcal{L}_h(\omega_{k_1} -\omega_{k}-\omega_{k_2})hh_2\Big|\Big( \cE_k^N + \cE_{k_1}^N + \cE_{k_2}^N\Big)dkdk_1dk_2.
\end{aligned}
\end{equation}
Let us now split the proof into two steps.
{\\\bf Step 1: Estimating $\mathbb{J}_1$.} Define the quantity  inside the triple integral of $\mathbb{J}_1$ after dropping $\Big( \cE_k^N + \cE_{k_1}^N + \cE_{k_2}^N\Big)$ to be $J_1$
\begin{equation*}
\begin{aligned}
J_1 \ := &\  |V_{k,k_1,k_2}|^2\delta(k-k_1-k_2) \Big|\mathcal{L}_g(\omega_k -\omega_{k_1}-\omega_{k_2})g_1g_2-\mathcal{L}_h(\omega_k -\omega_{k_1}-\omega_{k_2})h_1h_2\Big|,
\end{aligned}
\end{equation*}
which, by the triangle inequality, can be bounded as
\begin{equation*}
\begin{aligned}
J_1 \ \le &\  |V_{k,k_1,k_2}|^2\delta(k-k_1-k_2) \mathcal{L}_g(\omega_k -\omega_{k_1}-\omega_{k_2})|g_1g_2-h_1h_2|\\
\  &\ + |V_{k,k_1,k_2}|^2\delta(k-k_1-k_2)\Big| \mathcal{L}_g(\omega_k -\omega_{k_1}-\omega_{k_2})-\mathcal{L}_h(\omega_k -\omega_{k_1}-\omega_{k_2})\Big||h_1h_2|.
\end{aligned}
\end{equation*}
Define the two terms on the right hand side of the above inequality to be $J_{11}$ and $J_{12}$, respectively. 
\\ Let us now study $J_{11}$ in details. Using the definition of $\mathcal{L}_g$ and the triangle inequality
$$|g_1g_2-h_1h_2|\le |g_1||g_2-h_2|+|h_2||g_1-h_1|,$$
yields the following estimate on $J_{11}$
$$
\begin{aligned}
J_{11} \ \le & \ C|k||k_1||k_2|\delta(k-k_1-k_2)\frac{\Gamma_{g,k,k_1,k_2}}{(\omega_k-\omega_{k_1}-\omega_{k_2})^2+\Gamma_{g,k,k_1,k_2}^2}|g_1||g_2-h_2|\\
\ &\ + C|k||k_1||k_2|\delta(k-k_1-k_2)\frac{\Gamma_{g,k,k_1,k_2}}{(\omega_k-\omega_{k_1}-\omega_{k_2})^2+\Gamma_{g,k,k_1,k_2}^2}|h_2||g_1-h_1|.
\end{aligned}
$$
By the inequality
$$(\omega_k-\omega_{k_1}-\omega_{k_2})^2+\Gamma_{g,k,k_1,k_2}^2 \ge \Gamma_{g,k,k_1,k_2}^2,$$
we can bound $J_{11}$ as
$$
\begin{aligned}
J_{11} \ \le & \ C|k||k_1||k_2|\delta(k-k_1-k_2)\frac{1}{\Gamma_{g,k,k_1,k_2}}|g_1||g_2-h_2|\\
\ &\ + C|k||k_1||k_2|\delta(k-k_1-k_2)\frac{1}{\Gamma_{g,k,k_1,k_2}}|h_2||g_1-h_1|.
\end{aligned}
$$
The right hand side of the above inequality can be estimated by employing the following Cauchy-Schwarz inequality
$$\Gamma_{g,k,k_1,k_2} =  \mathfrak{M}_0[|g|]\left(|k|^2+|k_1|^2+|k_2|^2\right)\geq \mathfrak{M}_0[|g|]\left(|k_1|^2+|k_2|^2\right)\geq 2\mathfrak{M}_0[|g|]|k_1||k_2|,$$
where we have just used the lower bound of $\mathfrak{M}_0[|g|]$, yielding
$$
\begin{aligned}
J_{11} \ \le & \ \frac{C}{\mathfrak{M}_0[|g|]}|k|\delta(k-k_1-k_2)|g_1||g_2-h_2|
 + \frac{C}{\mathfrak{M}_0[|g|]}|k|\delta(k-k_1-k_2)|h_2||g_1-h_1|.
\end{aligned}
$$
Multiplying the above inequality with $\Big( \cE_k^N + \cE_{k_1}^N + \cE_{k_2}^N\Big)$ and integrating in $k$, $k_1$ and $k_2$ lead to
$$
\begin{aligned}
&\ \iiint_{\mathbb{R}^{3d}}J_{11}\Big( \cE_k^N + \cE_{k_1}^N + \cE_{k_2}^N\Big)dkdk_1dk_2\\
 \ \le & \ \iiint_{\mathbb{R}^{3d}}\frac{C}{\mathfrak{M}_0[|g|]}||k|\delta(k-k_1-k_2)\left[|g_1||g_2-h_2|
 + |h_2||g_1-h_1|\right]\Big( \cE_k^N + \cE_{k_1}^N + \cE_{k_2}^N\Big)dkdk_1dk_2.
\end{aligned}
$$
Using the resonant condition  $k = k_1 + k_2$, we reduce the triple integral on the right hand side to a double integral 
$$
\begin{aligned}
&\ \iiint_{\mathbb{R}^3}J_{11}\Big( \cE_k^N + \cE_{k_1}^N + \cE_{k_2}^N\Big)dkdk_1dk_2\\
 \ \le & \ \frac{C}{\mathfrak{M}_0[|g|]}\iint_{\mathbb{R}^{2d}}|k_1+k_2|\left[|g_1||g_2-h_2|
 + |h_2||g_1-h_1|\right]\Big(\cE_{k_1}^N + \cE_{k_2}^N\Big)dk_1dk_2,
\end{aligned}
$$
where, we have just used the inequality $$\cE_{k_1+k_2}^N\le C\cE_{k_1}^N + C\cE_{k_2}^N,$$
proved in Proposition \ref{Propo:C12}, to bound the sum $\cE_{k}^N + \cE_{k_1}^N + \cE_{k_2}^N$ by $C\Big(\cE_{k_1}^N + \cE_{k_2}^N\Big)$.
\\ Observing that
$$|k_1+k_2|\Big(\cE_{k_1}^N + \cE_{k_2}^N\Big)\le (|k_1|+|k_2|)\Big(\cE_{k_1}^N + \cE_{k_2}^N\Big)\le C \Big(\cE_{k_1}^{N+1} + \cE_{k_2}^{N+1}\Big),$$
we find
$$
\begin{aligned}
&\ \iiint_{\mathbb{R}^{3d}}J_{11}\Big( \cE_k^N + \cE_{k_1}^N + \cE_{k_2}^N\Big)dkdk_1dk_2\\
 \ \le & \ \frac{C}{\mathfrak{M}_0[|g|]}\iint_{\mathbb{R}^{2d}}\left[|g_1||g_2-h_2|
 + |h_2||g_1-h_1|\right]\Big(\cE_{k_1}^{N+1} + \cE_{k_2}^{N+1}\Big)dk_1dk_2,
\end{aligned}
$$
which immediately leads to
\begin{equation}\label{Lemma:Holder:E3} 
\begin{aligned}
&\ \iiint_{\mathbb{R}^{3d}}J_{11}\Big( \cE_k^N + \cE_{k_1}^N + \cE_{k_2}^N\Big)dkdk_1dk_2\\
 \ \le & \ \frac{C}{\mathfrak{M}_0[|g|]}\|g-h\|_{L^1_{N+1}}\left(\|g\|_{L^1}+\|g\|_{L^1_{N+1}}+\|h\|_{L^1}+\|h\|_{L^1_{N+1}}\right)\\
 \ \le & \ \frac{C}{\mathfrak{M}_0[|g|]}\|g-h\|_{L^1_{N+1}}\left(\|g\|_{L^1_{N+1}}+\|h\|_{L^1_{N+1}}\right).
\end{aligned}
\end{equation} 
Now, let us look at $J_{12}$, which can be written as
$$
\begin{aligned}
J_{12} \ = & \ C|k||k_1||k_2|\delta(k-k_1-k_2)|h_1h_2|\times\\
& \ \times \left|\frac{\Gamma_{g,k,k_1,k_2}[(\omega_k-\omega_{k_1}-\omega_{k_2})^2+\Gamma_{h,k,k_1,k_2}^2]-\Gamma_{h,k,k_1,k_2}[(\omega_k-\omega_{k_1}-\omega_{k_2})^2+\Gamma_{g,k,k_1,k_2}^2]}{[(\omega_k-\omega_{k_1}-\omega_{k_2})^2+\Gamma_{g,k,k_1,k_2}^2][(\omega_k-\omega_{k_1}-\omega_{k_2})^2+\Gamma_{h,k,k_1,k_2}^2]}\right|\\
\ = & \ C|k||k_1||k_2|\delta(k-k_1-k_2)|h_1h_2|\times\\
& \ \times \frac{|(\omega_k-\omega_{k_1}-\omega_{k_2})^2-\Gamma_{g,k,k_1,k_2}\Gamma_{h,k,k_1,k_2}||\Gamma_{g,k,k_1,k_2}-\Gamma_{h,k,k_1,k_2}|}{[(\omega_k-\omega_{k_1}-\omega_{k_2})^2+\Gamma_{g,k,k_1,k_2}^2][(\omega_k-\omega_{k_1}-\omega_{k_2})^2+\Gamma_{h,k,k_1,k_2}^2]}
\end{aligned}
$$
It follows from the Cauchy-Schwarz inequality that
$$
\begin{aligned}& \ [(\omega_k-\omega_{k_1}-\omega_{k_2})^2+\Gamma_{g,k,k_1,k_2}^2][(\omega_k-\omega_{k_1}-\omega_{k_2})^2+\Gamma_{h,k,k_1,k_2}^2] \\ 
\ge &\ |(\omega_k-\omega_{k_1}-\omega_{k_2})^2-\Gamma_{g,k,k_1,k_2}\Gamma_{h,k,k_1,k_2}||(\omega_k-\omega_{k_1}-\omega_{k_2})^2+\Gamma_{g,k,k_1,k_2}\Gamma_{h,k,k_1,k_2}|\\
 \ge &\ |(\omega_k-\omega_{k_1}-\omega_{k_2})^2-\Gamma_{g,k,k_1,k_2}\Gamma_{h,k,k_1,k_2}|\Gamma_{g,k,k_1,k_2}\Gamma_{h,k,k_1,k_2},
\end{aligned}
$$
from which, we obtain the following estimate on $J_{12}$
$$
\begin{aligned}
J_{12} 
\ \le & \ C|k||k_1||k_2||h_1h_2|\delta(k-k_1-k_2)\frac{|\Gamma_{g,k,k_1,k_2}-\Gamma_{h,k,k_1,k_2}|}{\Gamma_{g,k,k_1,k_2}\Gamma_{h,k,k_1,k_2}}.
\end{aligned}
$$
The numerator of the fraction on the right hand side has the following interesting property
$$
\begin{aligned}
|\Gamma_{g,k,k_1,k_2}-\Gamma_{h,k,k_1,k_2}|  \ = &\  C\left|(k^2+k_1^2+k_2^2)\mathfrak{M}_0[|g|-|h|]\right|,
\end{aligned}
$$
which can be bounded as follows
$$
\begin{aligned}
|\Gamma_{g,k,k_1,k_2}-\Gamma_{h,k,k_1,k_2}|  \ \le &\  C(k^2+k_1^2+k_2^2)\|g-h\|_{L^1},
\end{aligned}
$$
yielding an upper bound on $J_{12}$
$$
\begin{aligned}
J_{12} 
\ \le & \ C|k||k_1||k_2||h_1h_2|\delta(k-k_1-k_2)\frac{\|g-h\|_{L^1},}{(k^2+k_1^2+k_2^2) \mathfrak{M}_0[|g|]\mathfrak{M}_0[|h|]}.
\end{aligned}
$$
By the Cauchy-Schwarz inequality
$$k^2+k_1^2+k_2^2\ge k_1^2+k_2^2 \ge 2|k_1||k_2|,$$
and the lower bound on $\mathfrak{M}_0[|g|]$ and $\mathfrak{M}_0[|h|]$, the following estimate on $J_{12}$ then follows
$$
\begin{aligned}
J_{12} 
\ \le & \ \frac{C}{\mathfrak{M}_0[|g|]\mathfrak{M}_0[|h|]}||k||h_1h_2|\delta(k-k_1-k_2)\|g-h\|_{L^1}.
\end{aligned}
$$
Multiplying the above inequality with $\Big( \cE_k^N + \cE_{k_1}^N + \cE_{k_2}^N\Big)$ and integrate in $k$, $k_1$ and $k_2$, the same argument used to deduce \eqref{Lemma:Holder:E3}  leads to
\begin{equation}\label{Lemma:Holder:E4}
\begin{aligned}
\iiint_{\mathbb{R}^{3d}}J_{12} \Big( \cE_k^N + \cE_{k_1}^N + \cE_{k_2}^N\Big)dkdk_1dk_2
 \le & \frac{C}{\mathfrak{M}_0[|g|]\mathfrak{M}_0[|h|]}\|g-h\|_{L^1_{N+1}}.
\end{aligned}
\end{equation}
Note that $C$ is a constant depending on $\left(\|g\|_{L^1_{N+1}}+\|h\|_{L^1_{N+1}}\right)$.
Combining \eqref{Lemma:Holder:E3} and \eqref{Lemma:Holder:E4} yields
\begin{equation}\label{Lemma:Holder:E5}
\begin{aligned}
\mathbb{J}_1\  \le &\ \left(\frac{C}{\mathfrak{M}_0[|g|]\mathfrak{M}_0[|h|]}+\frac{C}{\mathfrak{M}_0[|g|]}\right)\|g-h\|_{L^1_{N+1}},
\end{aligned}
\end{equation}
where $C$ is a constant depending on $\left(\|g\|_{L^1_{N+1}}+\|h\|_{L^1_{N+1}}\right)$.
{\\\bf Step 2: Estimating $\mathbb{J}_2$.} The proof of estimating $\mathbb{J}_2$ follows exactly the same argument used in Step 1. As a consequence, we omit some details and give only the main estimates in the sequel. First, define the quantity  inside the triple integral of $\mathbb{J}_2$ after dropping $\Big( \cE_k^N + \cE_{k_1}^N + \cE_{k_2}^N\Big)$ to be $J_2$
\begin{equation*}
\begin{aligned}
J_2 \ := &\  |V_{k_1,k,k_2}|^2\delta(k_1-k-k_2) \Big|\mathcal{L}_g(\omega_{k_1} -\omega_{k}-\omega_{k_2})gg_2-\mathcal{L}_h(\omega_{k_1} -\omega_{k}-\omega_{k_2})hh_2\Big|,
\end{aligned}
\end{equation*}
which, by the triangle inequality, can be bounded as
\begin{equation*}
\begin{aligned}
J_2 \ \le &\  |V_{k_1,k,k_2}|^2\delta(k_1-k-k_2) \mathcal{L}_g(\omega_{k_1} -\omega_{k}-\omega_{k_2})|gg_2-hh_2|\\
\  &\ + |V_{k_1,k,k_2}|^2\delta(k_1-k-k_2)\Big| \mathcal{L}_g(\omega_{k_1} -\omega_{k}-\omega_{k_2})-\mathcal{L}_h(\omega_{k_1} -\omega_{k}-\omega_{k_2})\Big||hh_2|.
\end{aligned}
\end{equation*}
We set the two terms on the right hand side of the above inequality to be $J_{21}$ and $J_{22}$, respectively. 
\\ The following estimate on $J_{21}$ is a direct consequence of the triangle inequality
$$
\begin{aligned}
J_{21} \ \le & \ |k||k_1||k_2|\delta(k_1-k-k_2)\frac{\Gamma_{g,k,k_1,k_2}}{(\omega_{k_1}-\omega_{k}-\omega_{k_2})^2+\Gamma_{g,k,k_1,k_2}^2}|g||g_2-h_2|\\
\ &\ + C|k||k_1||k_2|\delta(k_1-k-k_2)\frac{\Gamma_{g,k,k_1,k_2}}{(\omega_{k_1}-\omega_{k}-\omega_{k_2})^2+\Gamma_{g,k,k_1,k_2}^2}|h_2||g-h|.
\end{aligned}
$$
The same argument used in Step 1 can be employed, implying the following estimate on $J_{21}$
$$ \begin{aligned}
J_{21} \ \le & \ \frac{C}{\mathfrak{M}_0[|g|]}|k_1|\delta(k-k_1-k_2)|g||g_2-h_2|
 + \frac{C}{\mathfrak{M}_0[|g|]}|k_1|\delta(k-k_1-k_2)|h_2||g-h|.
\end{aligned}
$$
Multiplying the above inequality with $\Big( \cE_k^N + \cE_{k_1}^N + \cE_{k_2}^N\Big)$ and integrate in $k$, $k_1$ and $k_2$ yields
\begin{equation}\label{Lemma:Holder:E6} 
\begin{aligned}
&\ C\iiint_{\mathbb{R}^{3d}}J_{21}\Big( \cE_k^N + \cE_{k_1}^N + \cE_{k_2}^N\Big)dkdk_1dk_2\\
 \ \le & \ C\left(\|g-h\|_{L^1}+\|g-h\|_{L^1_{N+1}}\right),
\end{aligned}
\end{equation} 
where $C$ is a constant depending on $\left(\|g\|_{L^1}+\|g\|_{L^1_{N+1}}+\|h\|_{L^1}+\|h\|_{L^1_{N+1}}\right)$.
\\
Now, similar to $J_{12}$, $J_{22}$  can be bounded as
$$
\begin{aligned}
J_{22} 
\ \le & \ C|k||k_1||k_2||hh_2|\delta(k_1-k-k_2)\frac{|\Gamma_{g,k,k_1,k_2}-\Gamma_{h,k,k_1,k_2}|}{\Gamma_{g,k,k_1,k_2}\Gamma_{h,k,k_1,k_2}}.
\end{aligned}
$$
The same argument used in Step 1 can be applied and the following estimate on $J_{22}$ then follows
$$
\begin{aligned}
J_{22} 
\ \le & \ |k||hh_2|\delta(k-k_1-k_2)\|g-h\|_{L^1}.
\end{aligned}
$$
Multiplying the above inequality with $\Big( \cE_k^N + \cE_{k_1}^N + \cE_{k_2}^N\Big)$ and integrate in $k$, $k_1$ and $k_2$, we obtain
\begin{equation}\label{Lemma:Holder:E7}
\begin{aligned}
&\ \iiint_{\mathbb{R}^{3d}}J_{22} \Big( \cE_k^N + \cE_{k_1}^N + \cE_{k_2}^N\Big)dkdk_1dk_2
 \le  \frac{C}{\mathfrak{M}_0[|g|]\mathfrak{M}_0[|h|]}\left(\|g-h\|_{L^1}+\|g-h\|_{L^1_{N+1}}\right),
\end{aligned}
\end{equation}
where $C$ is a constant depending on $\left(\|g\|_{L^1_{N+1}}+\|h\|_{L^1_{N+1}}\right)$.
\\
Combining \eqref{Lemma:Holder:E6} and \eqref{Lemma:Holder:E7} yields
\begin{equation}\label{Lemma:Holder:E8}
\begin{aligned}
\mathbb{J}_2\  \le &\ \left(\frac{C}{\mathfrak{M}_0[|g|]\mathfrak{M}_0[|h|]}+\frac{C}{\mathfrak{M}_0[|g|]}\right)\left(\|g-h\|_{L^1}+\|g-h\|_{L^1_{N+1}}\right)\\ \le &\ \left(\frac{C}{\mathfrak{M}_0[|g|]\mathfrak{M}_0[|h|]}+\frac{C}{\mathfrak{M}_0[|g|]}\right)\|g-h\|_{L^1_{N+1}}.
\end{aligned}
\end{equation}
Putting the two estimates \eqref{Lemma:Holder:E5} and \eqref{Lemma:Holder:E8} together with \eqref{Lemma:Holder:E1} and \eqref{Lemma:Holder:E2}, the conclusion of the Lemma then follows. 
\end{proof}
\begin{proof}[Proof of Proposition \ref{Propo:HolderC12}] The proposition now follows straightforwardly from the previous lemma. Indeed, we recall the interpolation inequality (see Lemma \ref{lem-Holder}):
$$ \| g\|_{L^1_n} \le \| g \|_{L^1_p}^{\frac{q-n}{q-p}}  \| g \|_{L^1_q}^{\frac{n-p}{q-p}}  $$
for $q>n>p$. Together with the boundedness of $g,h$ in $L^1_1 \cap L^1_{N+2}$, we obtain 
$$
\begin{aligned}
 \| g-h\|_{L^1_{N+1}}  &\le \| g-h\|_{L^1_N}^{\frac12} \| g-h\|_{L^1_{N+2}}^{\frac12} \le C_M \| g-h\|_{L^1_N}^{\frac12} 
\end{aligned}$$
Lemma \ref{Lemma:Holder} yields 
$$\|Q[g]-Q[h]\|_{L^1_N}  \le  C_{M,M',N}\| g-h\|_{L^1_N}^{\frac12} $$
which holds for all $N\geq 0 $. The proposition follows. 
\end{proof}

\section{Proof of Theorem \ref{Theorem:Main}}\label{Sec:Main}
We shall apply Theorem \ref{Theorem:ODE} for \eqref{WeakTurbulenceInitial}, which reads
$$ \partial_t f = \widetilde{Q}[f], \qquad\qquad  \widetilde{Q}[f] := Q[f] - 2\nu |k|^2f.$$

Fix an $N>1$. We choose the Banach spaces $E= L^{1}_{N}\big(\mathbb{R}^{d}\big)$,  $F= L^{1}_{N+3}\big(\mathbb{R}^{d}\big)$,  endowed with the  norms 
$$ \| f \|_{E}: =  \|f\|_{L^1_N},  \ \ \ \ \ \ \ \ \| f \|_{*}: =  \|f\|_{L^1_{N+3}}.$$  
We also define $$|f|_*:=\mathfrak{M}_{N+3}[f],$$
then
$$|f|_* \le \|f\|_*, \ \ \forall f\in F,\ \ \ |f+g|_* \le |f|_*+|g|_*, \ \ \forall f,g\in F,$$
$$\lambda|f|_* = |\lambda f|_*, \ \ \forall f\in F,\lambda\in\mathbb{R}_+,$$
and
$$|f|_* = \|f\|_{L^1_{N+3}}, \ \ \forall f\in \mathcal{S}_T.$$
Moreover, condition \eqref{LesbegueDominated} is automatically satisfied due to the Lebesgue dominated convergence theorem and  Theorem 1.2.7 \cite{BadialeSerra:SEE:2011}.

Clearly, $\mathcal{S}_T$ is a bounded and closed set with respect to the norm $\|\cdot \|_*$.By Proposition \ref{Propo:MomentsPropa}, for $f_0 \in \mathcal{S}_0\subset \mathcal{S}_T$, solutions to \eqref{WeakTurbulenceInitial} will remain in $\mathcal{S}_T$. Thus, it suffices to verify the three conditions $(\mathfrak{A})$, $(\mathfrak{B})$, $(\mathfrak{C})$  of Theorem \ref{Theorem:ODE}, then Theorem \ref{Theorem:Main} is a consequence of Theorem \ref{Theorem:ODE}. Notice that continuity condition  $(\mathfrak{A})$ follows directly from Proposition \ref{Propo:HolderC12}, we therefore only need to verify $(\mathfrak{B})$ and $(\mathfrak{C})$.


\subsection{Condition $(\mathfrak{B})$: Subtangent condition.}\label{Subtangent}
Let $f$ be an arbitrary element of the set $\mathcal{S}_T$. It suffices to prove the following claim: for all $\epsilon>0$, there exists $h_*$ depending on $f$ and $\epsilon$ such that 
\begin{equation}\label{claim}B(f+h\widetilde Q[f],h\epsilon)\cap\mathcal{S}_T\not =\emptyset , \qquad 0<h<h_*.\end{equation}
For $R>0$, let $\chi_R(k)$  be the characteristic function of the ball $B(0,R)$, and set  
\begin{equation}\label{def-wR} w_R:=f +h\widetilde Q[f_R] , \qquad\quad f_R(k)=\chi_R(k)f(k),\end{equation}
recalling $\widetilde{Q}[g] = Q[g] - 2\nu |k|^2 g$. We shall prove that for all $R>0$, there exists an $h_R$ so that $w_R$ belongs to $\mathcal{S}_T$, for all $0<h\le h_R$. It is clear that $w_R \in L^{1}(\RR^d) \cap L^1_{N+3}(\RR^d)$.

We now check the conditions {\bf S1}, {\bf S2} and {\bf S3} in \eqref{subsetS_T}.

~\\
{\bf Condition ({\bf S1}): Positivity of the set $\mathcal{S}_T$.} Note that one can write $Q[f] = Q_\mathrm{gain}[f] - Q_\mathrm{loss}[f]$, with $Q_\mathrm{gain}[f] \ge 0$ and $Q_\mathrm{loss}[f] = f Q_-[f]$. 
Since $f_R$ is compactly supported, it is clear that $\chi_RQ_-[f_R]$ is bounded by a universal positive constant $4R$,  computed in Proposition \ref{Propo:MassLowerBound}.    Hence, 
$$\begin{aligned}
 w_R 
 &= f  + h \Big( Q[f_R]  - 2 \nu |k|^2f_R \Big) 
 \\&\ge   f  -  h f_R \Big(4R +  2\nu R^2\Big) 
 \end{aligned}$$
which is nonnegative, for sufficiently small $h$; precisely, $h < \frac{h_R}{2}: =\frac{1}{2(4R + 2\nu R^2)}$. 

Suppose that $R>R_0$ are chosen large enough such that $$\|\chi_{R}u_0\|_{*} > \|\chi_{R_0}u_0\|_{*} >R^*.$$ 
Let us check \eqref{Coercivity} for $R_0<R$. By Proposition \ref{Propo:MassLowerBound} 

\begin{equation}\label{K2R0}
\chi_{R_0}\frac{w_R-f}{h}=\chi_{R_0}\tilde{Q}[f_R]\geq -(4R_0 +\nu R_0^2) f_{R_0}.
\end{equation}

Moreover
$$|w_R-f|_*= h|Q[f_R]  - 2 \nu |k|^2f_R|_*\le C_0\|f_R\|_{*} ,$$
where the last inequality follows from Proposition \ref{Propo:MomentsPropa}. That leads to
\begin{equation}\label{K1R0}
|w_R-f|_* \le \frac{C(\lambda_1,\lambda_2)e^{(2\nu R_0^2+4R_0)T}}{\|f_0(k)\chi_{R_0}\|_{L^1}}\|f\|_{*} .\end{equation}
 with $\frac{C_0(\lambda_1,\lambda_2)e^{(2\nu R_0^2+4R_0)T}}{\|f_0(k)\chi_{R_0}\|_{L^1}}$ computed in  Proposition \ref{Propo:MomentsPropa}.

~\\
{\bf Condition ({\bf S2}): Upper bound of the set $\mathcal{S}_T$.} Since $$\|f\|_{*}<(2R_*+1)e^{C_*T},$$
and 
$$\lim_{h\to 0}\|f-w_R\|_{*} =0,$$
we can choose $h_*$ small enough such that  for $0<h<h_*$
$$\|w_R\|_{*} <(2R_*+1)e^{C_*T}.$$

~\\
{\bf Condition ({\bf S3}): Lower bound of the set $\mathcal{S}_T$.} Since $$\|f\|_{*} >R^*e^{-C^*T}/2,$$
and 
$$\lim_{h\to 0}\|f-w_R\|_{*} =0,$$
we can choose $h_*$ small enough such that 
$$\|w_R\|_{*} >R^*e^{-C^*T}/2.$$

This proves the claim \eqref{claim}, and hence condition $(\mathfrak{A})$ is verified.

\subsection{Condition $(\mathfrak{C})$: One side Lipschitz condition.}\label{Lipschitz}
By the Lebesgue's dominated convergence theorem, we have that
$$
\begin{aligned}
\Big[\varphi,\phi\Big] 
&= \lim_{h\rightarrow 0^{-}}h^{-1}\big(\| \phi + h\varphi \|_E - \| \phi \|_E \big)
\\
 &= \lim_{h\rightarrow 0^{-}}h^{-1} \int_{\RR^d} ( | \phi + h \varphi| - |\phi| )  (\cE_k + \cE_k^N)\; dk
\\&
\le \int_{\mathbb{R}^d}\varphi(k)\mathrm{sign}(\phi(k)) (\cE_k + \cE_k^N)dk.
\end{aligned}$$
Hence, recalling $\widetilde Q[f] = Q[f] - 2\nu |k|^2f$, we estimate 
$$
\begin{aligned}
\big[ \widetilde{Q}[f] - \widetilde{Q}[g], f - g \big] 
& \le 
\int_{\mathbb{R}^d}[\widetilde Q[f](k)-\widetilde Q[g](k)]\mathrm{sign}((f-g)(k)) \cE_k^Ndk
\\
& \le 
\|Q[f]-Q[g]\|_E  -2 \nu   \| |k|^2(f-g)\|_E.
\end{aligned}$$
Using Lemma \ref{Lemma:Holder} and recalling $\|\cdot \|_E = \|\cdot \|_{L^1_N}$, we have
$$
\begin{aligned}
\|Q[f]-Q[g]\|_{E} 
& \le  C_{N}  \| f-g\|_{L^1_{N}}.
\end{aligned}$$
Since $C|k|^N- 2\nu |k|^{N+2}$ is always bounded by $C'|k|^N$ for $C'>0$,
we obtain 
$$
\begin{aligned}
\big[ \widetilde{Q}[f] - \widetilde{Q}[g], f - g \big]  
&\le  
  C_N \| f-g\|_{E} .\end{aligned}$$

The condition $(\mathfrak{C}$) follows. The proof of Theorem \ref{Theorem:Main} is complete.

\section{Proof of Theorem \ref{Theorem:ODE}}\label{Appendix}
The proof is divided into four parts. 
{\\\\\bf Part 1:}
According to our assumption, $\mathcal{S}_T$ is bounded by a constant $C_S$ in the norm $\|\cdot\|$,  due to the H\"older continuity property of $\mathcal{Q}[u]$, 
$$\|\mathcal{Q}[u]\|\le C_\mathcal{Q}, \ \forall u \in \mathcal{S}_T.$$ 
By our assumption, for an element $u$ in $\mathcal{S}_0\subset \mathcal{S}_T$, there exists $\xi_u>0$ such that for $0<\xi<\xi_u$, $$B(u+\xi \mathcal{Q}[u],\delta)\cap {\mathcal{S}}_T\backslash\{u+\xi \mathcal{Q}[u]\}\ne {\O},$$ for $\delta$ small enough.
\\ For a fixed $u$ and $\epsilon>0$, there exists $\xi>0$ such that $\|u-v\|\le (C_\mathcal{Q}+1)\xi$ then  $\|Q(u)-Q(v)\|\leq \frac{\epsilon}{2}$. Let $z$ be in $B\left(u+\xi Q[u],\frac{\epsilon \xi}{2}\right)\cap {\mathcal{S}_T}\backslash\{u+\xi \mathcal{Q}[u]\} $ satisfying
$$\left|\frac{z-u}{\xi}\right|_* \le  \frac{C_*}{2}\|u\|_*,\ \ \ \  \chi_{R_0}\frac{z-u}{\xi}\geq -\chi_{R_0}\frac{C^*}{2}u,$$
and define
$$t\mapsto \Theta(t)=u+\frac{t(z-u)}{\xi},~~~~t\in[0,\xi].$$

Now, we also have the following lower bound on $\Theta$
\begin{equation}\label{Theorem:ODE:E4a}
\begin{aligned}
\chi_{R_0}\Theta(t)\ = & \ \chi_{R_0}\left(u+\frac{t(z-u)}{\xi}\right) \\
\ge & \ \chi_{R_0}\left(1-\frac{tC^*}{2}\right)u\\
\ge & \ \chi_{R_0}e^{-tC^*}\Theta(0),
\end{aligned}
\end{equation}
for $\xi$ and $0\le t\le\xi \le \frac{\log2}{C^*}$.

Hence 
\begin{equation}\label{Theorem:ODE:E4aa}
\|\chi_{R_0}\Theta(t)\|_* >\frac{R^*e^{-C^*t}}{2}.
\end{equation}

We also have that
$$\begin{aligned}
\|\Theta(t)\|_* =|\Theta(t)|_* = \left|u+\frac{t(z-u)}{\xi}\right|_* & \le  |u|_*+\left|\frac{t(z-u)}{\xi}\right|_* \le  |u|_* + |u|_*\frac{tC_*}{2} \\
&= \|\Theta(0)\|_*\left(1+\frac{tC_*}{2}\right).
\end{aligned}$$
We then obtain
\begin{equation}\label{Theorem:ODE:E4}
\|\Theta(t)\|_*\le (\|\Theta(0)\|_*+1)e^{C_*t}-1 <
 (2R_*+1)e^{C_*t}.\end{equation}

Therefore, $\Theta$ maps $[0,\xi]$ into $\mathcal{S}_T$. 
It is straightforward that $$\|\Theta(t)-u\|\le \left\|\frac{t(z-u)}{{\xi}}\right\|\leq \xi\|\mathcal{Q}[u]\|+\frac{\epsilon \xi}{2}<(C_\mathcal{Q}+1)\xi,$$
which implies $$\|\mathcal{Q}[{\Theta}(t)]-\mathcal{Q}[u]\|\leq \frac{\epsilon}{2},~~\forall t\in[0,\xi].$$
Combining the above inequality and the fact that $$\|\dot{\Theta}(t)-\mathcal{Q}[u]\|=\left\|\frac{z-u}{\xi}-\mathcal{Q}[u]\right\|\leq \frac{\epsilon}{2},$$
we obtain 
\begin{equation}\label{Theorem:ODE:E1}
\|\dot{\Theta}(t)-\mathcal{Q}[{\Theta}(t)]\|\leq \epsilon,~~\forall t\in[0,\xi].
\end{equation}
 {\\\bf Part 2:} Let $\Theta$ be a solution to \eqref{Theorem:ODE:E1} on $[0,\xi]$  constructed in Part 1. 
Using the procedure of Part 1, we assume that $\Theta$ can be extended to the interval $[\tau,\tau+\tau']$.
 \\ The same arguments that lead to \eqref{Theorem:ODE:E4} imply
  $$\|\Theta(\tau+t)\|_*\le \left( (\|\Theta(\tau)\|_*+1)e^{C_*t}-1\right), \ \  \ t\in[0,\tau'].$$
Combining the above inequality with \eqref{Theorem:ODE:E4} yields
\begin{equation}\label{Theorem:ODE:E5}
\begin{aligned}
\|\Theta(\tau+t)\|_*\
\le&~~\left(\left(\|\Theta(0)\|_*+1\right)e^{C_*\tau}-1+1\right)e^{C_*t}-1\\
\le&~~\left(\|\Theta(0)\|_*+1\right)e^{C_*(\tau+t)}-1\\
<&~~ (2R_*+1)e^{C_*(\tau+t)},
\end{aligned}
\end{equation}
where the last inequality follows from the fact that $R_*\ge 1$.
\\ Similar, we also have
\begin{equation}\label{Theorem:ODE:E5a}
\begin{aligned}
\chi_{R_0}\Theta(\tau+t)\ 
\ge & \ \chi_{R_0}e^{-(\tau+t)C^*}\Theta(0),
\end{aligned}
\end{equation}
which implies
\begin{equation}\label{Theorem:ODE:E5aa}
\|\chi_{R_0}\Theta(\tau+t)\|_* >\frac{R^*e^{-C^*(\tau+t)}}{2}.
\end{equation}
{\\\\\bf Part 3:} From Part 1, there exists a solution $\Theta$ to the equation \eqref{Theorem:ODE:E1} on an interval $[0,\xi]$. Now, we have the following procedure.
\begin{itemize}
\item {\it Step 1:}  Suppose that  we can construct the solution $\Theta$ of \eqref{Theorem:ODE:E1} on $[0,\tau]$ $(\tau<T)$, where $\Theta(0)\in\mathcal{S}_0\cap B_*\Big(O,R_*\Big)\backslash \overline{B_*\Big(O,R^*\Big)}$. Since due to Part 2 $\Theta(\tau)\in\mathcal{S}_\tau$, by the same process as in Part 1 and by \eqref{Theorem:ODE:E4}, \eqref{Theorem:ODE:E4a} \eqref{Theorem:ODE:E4aa}, \eqref{Theorem:ODE:E5}, \eqref{Theorem:ODE:E5a} and \eqref{Theorem:ODE:E5aa} the solution $\Theta$ could be extended to  $[\tau,\tau+h_\tau]$ where $\tau+h_\tau\le T$. 
\item {\it Step 2:} Suppose that we can construct the solution $\Theta$ of \eqref{Theorem:ODE:E1}  on a series of intervals $[0,\tau_1]$, $[\tau_1,\tau_2]$, $\cdots$, $[\tau_n,\tau_{n+1}]$, $\cdots$. Since the increasing sequence $\{\tau_n\}$ is bounded by $T$, it has a limit, noted by $\tau.$
 Moreover
\begin{equation}\label{Theorem:ODE:E5b}
\begin{aligned}
\|\Theta(t)\|_*\le & \ (\|\Theta(0)\|_*+1)e^{C_*t}-1 <  \
 (2R_*+1)e^{C_*t},  &  \forall t\in[0,\tau),\\
\chi_{R_0}\Theta(t)\ 
\ge & \ \chi_{R_0}e^{-tC^*}\Theta(0), & \forall t\in[0,\tau),
\end{aligned}
\end{equation}
and
\begin{equation}\label{Theorem:ODE:E5bb}
\|\chi_{R_0}\Theta(t)\|_* >\frac{R^*e^{-C^*t}}{2}, \forall t\in[0,\tau).
\end{equation} 
Recall that $\|\mathcal{Q}({\Theta})\|$ is bounded by $C_\mathcal{Q}$ on $[\tau_n,\tau_{n+1}]$ for all $n\in\mathbb{N},$ then $\|\dot{\Theta}\|$ is bounded by $\epsilon+C_\mathcal{Q}$ on $[0,\tau)$. As a consequence,  $\Theta(\tau)$ can be defined to be the limit of $\Theta(\tau_n)$ with respect to the norm $\|\cdot\|$. 
That, together with \eqref{LesbegueDominated} and the fact that $\mathcal{S}_\tau$ is closed with respect to $\|\cdot\|_*$, implies that $\Theta$ is a solution of \eqref{Theorem:ODE:E1} on $[0,\tau]$. In addition \eqref{Theorem:ODE:E5b} and \eqref{Theorem:ODE:E5bb} also hold true on $[0,\tau]$. 
\end{itemize}
As a consequence, if
 the solution $\Theta$ can be defined on $[0,T_0)$, $T_0<T$, it could be extended to $[0,T_0]$. Now, we suppose that $[0,T_0]$ is the maximal  closed interval that $\Theta$ could be defined, by Step 1 and Step 2. $\Theta$ could be extended to a larger interval $[T_0,T_0+T_h]$, which means that $T=T_0$ and $\Theta$ is defined on the whole interval $[0,T]$.
{\\\\\bf Part 4:} Finally, let us consider a sequence of solution $\{u^\epsilon\}$ to \eqref{Theorem:ODE:E1} on $[0,T]$. We will prove that this is a Cauchy sequence. 
Let $\{u^\epsilon\}$ and $\{v^\epsilon\}$ be two sequences of solutions to \eqref{Theorem:ODE:E1} on $[0,T]$. We note that $u^\epsilon$ and $v^\epsilon$ are affine functions on $[0,T]$. Moreover by the one-side Lipschitz condition
\begin{eqnarray*}
\frac{d}{dt} \|u^\epsilon(t)-v^\epsilon(t)\|&=&\Big[u^\epsilon(t)-v^\epsilon(t),\dot{u}^\epsilon(t)-\dot{v}^\epsilon(t)\Big]\\
&\le& \Big[u^\epsilon(t)-v^\epsilon(t),\mathcal{Q}[u^\epsilon(t)]-\mathcal{Q}[v^\epsilon(t)]\Big]+2\epsilon\\
&\le& C\|u^\epsilon(t)-v^\epsilon(t)\|+2\epsilon, 
\end{eqnarray*}
for a.e. $t\in[0,T]$, which leads to
$$\|u^\epsilon(t)-v^\epsilon(t)\|\le 2\epsilon\frac{e^{LT}}{L}.$$
By letting $\epsilon$ tend to $0$, $u^\epsilon\to u$ uniformly on $[0,T]$. It is straightforward that $u$ is a solution to \eqref{Theorem_ODE_Eq}.

~\\
{\bf Acknowledgements:}  
This work has been partially supported by NSF grants DMS 143064 and RNMS (Ki-Net) DMS-1107444, DMS (Ki-Net) 1107291. M.-B Tran is partially supported by NSF Grants DMS-1814149 and DMS-1854453.

\bibliographystyle{plain}
\bibliography{QuantumBoltzmann}

\end{document}